\newcommand{\abs}[1]{\left|#1\right|}
\newcommand{\bdry}[1]{\partial #1}
\newcommand{\bgset}[1]{\big\{#1\big\}}
\newcommand{\dualp}[2]{#1\, #2}
\newcommand{\incl}{\subset}
\newcommand{\isom}{\approx}
\newcommand{\norm}[2][]{\left\|#2\right\|_{#1}}
\renewcommand{\o}{\text{o}}
\newcommand{\PS}[1]{$(\text{PS})_{#1}$}
\newcommand{\pnorm}[2][]{\if #1'' \left|#2\right|_p \else \left|#2\right|_{#1} \fi}
\newcommand{\QED}{\mbox{\qedhere}}
\newcommand{\seq}[1]{\left(#1\right)}
\newcommand{\set}[1]{\left\{#1\right\}}
\newcommand{\A}{{\cal A}}
\newcommand{\F}{{\cal F}}
\newcommand{\M}{{\cal M}}
\newcommand{\R}{\mathbb R}
\newcommand{\RP}{\R \text{P}}
\newcommand{\Z}{\mathbb Z}
\DeclareMathOperator{\divg}{div}
\DeclareMathOperator{\essinf}{ess\, inf}
\newenvironment{enumarab}{\begin{enumerate}

}{\end{enumerate}}
\newenvironment{enumroman}{\begin{enumerate}

}{\end{enumerate}}
\newtheorem{lemma}{Lemma}[section]
\newtheorem{proposition}[lemma]{Proposition}
\newtheorem{theorem}[lemma]{Theorem}
\theoremstyle{definition}
\newtheorem{definition}[lemma]{Definition}
\numberwithin{equation}{section}
\title{\bf $p$-Laplace equations with singular weights\thanks{{\em MSC2010:} Primary 35J75, Secondary 35J20, 35J25, 35J92\smallskip
\newline \indent\; {\em Key Words and Phrases:} $p$-Laplacian, Dirichlet problem, singular weights, nontrivial solutions, Morse theory, critical groups, cohomological local splitting \smallskip}}
\author{\bf Kanishka Perera\thanks{This work was completed while the first-named author was visiting the Department of Mathematics at the University of Ulsan, and he is grateful for the kind hospitality of the department.}\\
Department of Mathematical Sciences\\
Florida Institute of Technology\\
Melbourne, FL 32901, USA\\
\em kperera@fit.edu\\
[\bigskipamount]
\bf Inbo Sim\thanks{This work was supported by the 2012-0117 Research Fund of the University of Ulsan.}\\
Department of Mathematics\\
University of Ulsan\\
Ulsan 680-749, Republic of Korea\\
\em ibsim@ulsan.ac.kr}
\date{}
\begin{document}

\maketitle

\begin{abstract}
We study a class of $p$-Laplacian Dirichlet problems with weights that are possibly singular on the boundary of the domain, and obtain nontrivial solutions using Morse theory. In the absence of a direct sum decomposition, we use a cohomological local splitting to get an estimate of the critical groups.
\end{abstract}

\newpage

\section{Introduction}

Let $\Omega$ be a bounded domain in $\R^N$, $N \ge 2$, with Lipschitz boundary $\bdry{\Omega}$. The purpose of this paper is to study the boundary value problem
\begin{equation} \label{1.1}
\left\{\begin{aligned}
- \Delta_p\, u & = f(x,u) && \text{in } \Omega\\[10pt]
u & = 0 && \text{on } \bdry{\Omega},
\end{aligned}\right.
\end{equation}
where $\Delta_p\, u = \divg (|\nabla u|^{p-2}\, \nabla u)$ is the $p$-Laplacian of $u$, $p \in (1,\infty)$, and $f$ is a Carath\'{e}odory function on $\Omega \times \R$ satisfying a subcritical growth condition of the form
\begin{equation} \label{1.2}
|f(x,t)| \le \sum_{i=1}^n K_i(x)\, |t|^{q_i - 1} \quad \text{for a.a. $x \in \Omega$ and all $t \in \R$}
\end{equation}
for some $q_i \in [1,p^\ast)$, $p^\ast = Np/(N-p)$ if $p < N$ and $p^\ast = \infty$ if $p \ge N$, and measurable weights $K_i$ that are possibly singular on $\bdry{\Omega}$.

Elliptic boundary value problems with singular weights have become an increasingly active area of research during recent years. Cuesta \cite{MR1836801} studied the eigenvalue problem
\begin{equation} \label{1.3}
\left\{\begin{aligned}
- \Delta_p\, u & = \lambda\, K(x)\, |u|^{p-2}\, u && \text{in } \Omega\\[10pt]
u & = 0 && \text{on } \bdry{\Omega},
\end{aligned}\right.
\end{equation}
where the weight $K \in L^s(\Omega)$ with $s > N/p$ if $p \le N$ and $s = 1$ if $p > N$. In the ODE case $N = 1$, Kajikiya, Lee, and Sim \cite{MR2409516} considered weights that are strongly singular on the boundary and may not be in $L^1$. Montenegro and Lorca \cite{MR2914567} used the Hardy-Sobolev inequality to study the spectrum of \eqref{1.3} for a wide class of singular weights $\A_p$ (see Definition \ref{Definition 2.1}). In the semilinear case $p = 2$, Kajikiya \cite{MR2674189} showed that problem \eqref{1.1} with $f(x,u) = K(x)\, |u|^{q-1}\, u$, $q > 1$, and $K \in \A_q$ has a positive solution and infinitely many solutions without positivity using variational methods. Kajikiya, Lee, and Sim \cite{MR2833356} obtained positive and nodal solutions of \eqref{1.1} when $N = 1$ and $f$ is strongly singular on the boundary and $p$-superlinear at infinity using bifurcation arguments.

Here we study the critical groups of the variational functional associated with problem \eqref{1.1} and obtain nontrivial solutions using Morse theory. The admissible class of weights is defined in Section \ref{Section 2}. After the preliminaries on a related eigenvalue problem in Section \ref{Section 3}, we use a cohomological local splitting to get an estimate of the critical groups in the absence of a direct sum decomposition in Section \ref{Section 4}. Our main existence result is for the $p$-superlinear case and is proved in Section \ref{Section 5} (see Theorem \ref{Theorem 5.6}).

\section{Variational setting} \label{Section 2}

Let $\pnorm[p]{\cdot}$ denote the norm in $L^p(\Omega)$, and let $\rho(x) = \inf_{y \in \bdry{\Omega}}\, |x - y|$ be the distance from $x \in \Omega$ to $\bdry{\Omega}$. We consider the following class of weights.

\begin{definition} \label{Definition 2.1}
For $q \in [1,p^\ast)$, let $\A_q$ denote the class of measurable functions $K$ such that $K \rho^a \in L^r(\Omega)$ for some $a \in [0,q - 1]$ and $r \in (1,\infty)$ satisfying
\begin{equation} \label{2.1}
\frac{1}{r} + \frac{a}{p} + \frac{q-a}{p^\ast} < 1.
\end{equation}
\end{definition}

Let $W^{1,\, p}_0(\Omega)$ be the usual Sobolev space, with the norm $\norm{u} = \pnorm[p]{\nabla u}$, and let $C$ denote a generic positive constant.

\begin{lemma} \label{Lemma 2.2}
If $q \in [1,p^\ast)$ and $K \in \A_q$, then there exists $b < p^\ast$ such that
\[
\int_\Omega |K(x)|\, |u|^{q-1}\, |v|\, dx \le C \norm{u}^{q-1} \pnorm[b]{v} \quad \forall u, v \in W^{1,\, p}_0(\Omega).
\]
\end{lemma}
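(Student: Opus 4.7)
The plan is to combine the weight decomposition $K = (K\rho^a)\rho^{-a}$ with a four-term Hölder inequality, absorbing the $\rho^{-a}$ factor via Hardy's inequality and the remaining power of $|u|$ via the Sobolev embedding. Concretely, using Definition~\ref{Definition 2.1}, fix $a \in [0, q-1]$ and $r \in (1,\infty)$ with $K\rho^a \in L^r(\Omega)$ and \eqref{2.1} holding. Since $0 \le a \le q-1$, we may factor the integrand as
\[
|K(x)|\, |u|^{q-1}\, |v| \;=\; \bigl(|K(x)|\, \rho^a\bigr) \cdot \bigl(|u|/\rho\bigr)^a \cdot |u|^{q-1-a} \cdot |v|.
\]

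The next step is to place these four factors in $L^r$, $L^{p/a}$, $L^{p^\ast/(q-1-a)}$, and $L^b$ respectively, with $b$ defined by demanding the reciprocals sum to one:
\[
\frac{1}{b} \;=\; 1 - \frac{1}{r} - \frac{a}{p} - \frac{q-1-a}{p^\ast}.
\]
Since $(q-a)/p^\ast = (q-1-a)/p^\ast + 1/p^\ast$, the hypothesis \eqref{2.1} rewrites precisely as $1/b > 1/p^\ast$, i.e., $b < p^\ast$. The remaining admissibility checks are short: \eqref{2.1} forces $a \le p$ (otherwise $a/p$ alone would already exceed $1$), so $p/a \ge 1$; and $q-1-a < p^\ast$ follows from $q < p^\ast$. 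Degenerate cases ($a = 0$, $a = q-1$, or $q = 1$) are handled by collapsing the corresponding trivial factor and invoking Hölder with fewer terms.

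Finally, Hardy's inequality $\pnorm[p]{u/\rho} \le C \norm{u}$ (valid on bounded Lipschitz domains for $p > 1$) bounds the second factor by $C\norm{u}^a$, while the Sobolev embedding $\pnorm[p^\ast]{u} \le C\norm{u}$ bounds the third by $C\norm{u}^{q-1-a}$; together with the finite constant $\pnorm[r]{K\rho^a}$ absorbed into $C$, this yields
\[
\int_\Omega |K(x)|\, |u|^{q-1}\, |v|\, dx \;\le\; C\, \norm{u}^{a + (q-1-a)}\, \pnorm[b]{v} \;=\; C\, \norm{u}^{q-1}\, \pnorm[b]{v}.
\]
The argument is mostly bookkeeping; the one substantive point is that the \emph{strict} inequality in \eqref{2.1} is exactly what delivers $b$ \emph{strictly} below $p^\ast$, which is the feature one will need later for compactness of the associated Nemytskii operators.
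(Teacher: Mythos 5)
Your argument follows the same route as the paper's proof: the factorization $|K|\,|u|^{q-1}\,|v| = |K\rho^a|\cdot|u/\rho|^a\cdot|u|^{q-1-a}\cdot|v|$, a four-factor H\"older inequality, Hardy's inequality for the $|u/\rho|^a$ term, and the Sobolev imbedding for the remaining powers of $u$. Your bookkeeping for $b$, and the observation that the strictness of \eqref{2.1} is precisely what yields $b < p^\ast$, are both correct. The one place where your allocation of H\"older exponents differs from the paper's is that you put the factor $|u|^{q-1-a}$ into $L^{p^\ast/(q-1-a)}$, whereas the paper puts it into the same space $L^b$ as $v$, so that the exponent identity reads $1/r + a/p + (q-a)/b = 1$.

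That difference is harmless when $p < N$ and when $p > N$, but it breaks down in the borderline case $p = N$ (with $N \ge 2$), which the lemma must cover since $p$ ranges over all of $(1,\infty)$. There $p^\ast = \infty$ by the paper's convention, so your third factor lands in $L^\infty$ and the inequality $\pnorm[p^\ast]{u} \le C \norm{u}$ you invoke becomes the imbedding $W^{1,N}_0(\Omega) \hookrightarrow L^\infty(\Omega)$, which is false for $N \ge 2$. The repair is exactly the paper's choice: since \eqref{2.1} gives $1 - 1/r - a/p > 0$, the exponent $b$ defined by $(q-a)/b = 1 - 1/r - a/p$ is finite and satisfies $b < p^\ast$, and placing both $|u|^{q-1-a}$ and $|v|$ in $L^b$ keeps every Sobolev imbedding at a finite subcritical exponent, valid for all $p \in (1,\infty)$. (Alternatively, keep your scheme but replace $p^\ast$ by a sufficiently large finite exponent in the third factor; the strict inequality in \eqref{2.1} leaves room to do so.) With that adjustment your proof coincides with the paper's.
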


\begin{proof}
Let $a$ and $r$ be as in Definition \ref{Definition 2.1}. By the H\"older inequality,
\[
\int_\Omega |K(x)|\, |u|^{q-1}\, |v|\, dx = \int_\Omega \abs{K \rho^a} \abs{\frac{u}{\rho}}^a |u|^{q-1-a}\, |v|\, dx \le \pnorm[r]{K \rho^a} \pnorm[p]{\frac{u}{\rho}}^a \pnorm[b]{u}^{q-1-a} \pnorm[b]{v},
\]
where $1/r + a/p + (q-a)/b = 1$ and hence $b < p^\ast$ by \eqref{2.1}. Since $\pnorm[p]{u/\rho} \le C \norm{u}$ by the Hardy inequality (see Ne{\v{c}}as \cite{MR0163054}) and $\pnorm[b]{u} \le C \norm{u}$ by the Sobolev imbedding, the conclusion follows.
\end{proof}

We assume that $K_i \in \A_{q_i}$ for $i = 1,\dots,n$. Recall that a weak solution of problem \eqref{1.1} is a function $u \in W^{1,\, p}_0(\Omega)$ satisfying
\[
\int_\Omega |\nabla u|^{p-2}\, \nabla u \cdot \nabla v\, dx = \int_\Omega f(x,u)\, v\, dx \quad \forall v \in W^{1,\, p}_0(\Omega).
\]
By \eqref{1.2} and Lemma \ref{Lemma 2.2}, the integral on the right is well-defined. Weak solutions coincide with critical points of the $C^1$-functional
\begin{equation} \label{2.2}
\Phi(u) = \int_\Omega \left[\frac{1}{p}\, |\nabla u|^p - F(x,u)\right] dx, \quad u \in W^{1,\, p}_0(\Omega),
\end{equation}
where $F(x,t) = \int_0^t f(x,\tau)\, d\tau$. Lemma \ref{Lemma 2.2} also gives the following compactness result.

\begin{lemma} \label{Lemma 2.3}
Every bounded sequence $\seq{u_j} \subset W^{1,\, p}_0(\Omega)$ such that $\Phi'(u_j) \to 0$ has a convergent subsequence.
\end{lemma}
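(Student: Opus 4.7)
The plan is to use the standard $(S_+)$-property of the $p$-Laplacian operator, combined with Lemma \ref{Lemma 2.2} to control the nonlinear term.

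First, I would extract a weakly convergent subsequence. Since $(u_j)$ is bounded in $W^{1,p}_0(\Omega)$, after passing to a subsequence we may assume $u_j \rightharpoonup u$ weakly in $W^{1,p}_0(\Omega)$, and by the Rellich--Kondrachov theorem, $u_j \to u$ strongly in $L^b(\Omega)$ for every $b < p^\ast$, and a.e.\ in $\Omega$. In particular, $u_j - u \to 0$ in $L^{b_i}(\Omega)$ for each of the exponents $b_i < p^\ast$ supplied by Lemma \ref{Lemma 2.2} applied to the weights $K_i$.

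Next, I would test $\Phi'(u_j)$ against $u_j - u$. Since $\Phi'(u_j) \to 0$ in the dual and $(u_j - u)$ is bounded, $\Phi'(u_j)(u_j - u) \to 0$, that is
\[
\int_\Omega |\nabla u_j|^{p-2}\, \nabla u_j \cdot \nabla (u_j - u)\, dx - \int_\Omega f(x,u_j)(u_j - u)\, dx \to 0.
\]
The growth bound \eqref{1.2} together with Lemma \ref{Lemma 2.2} gives
\[
\left|\int_\Omega f(x,u_j)(u_j - u)\, dx\right| \le \sum_{i=1}^n \int_\Omega |K_i(x)|\, |u_j|^{q_i-1}\, |u_j - u|\, dx \le C \sum_{i=1}^n \norm{u_j}^{q_i-1} \pnorm[b_i]{u_j - u},
\]
and since $\norm{u_j}$ is bounded while $\pnorm[b_i]{u_j - u} \to 0$, this term vanishes. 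Consequently
\[
\int_\Omega |\nabla u_j|^{p-2}\, \nabla u_j \cdot \nabla (u_j - u)\, dx \to 0.
\]

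Finally, I would invoke the $(S_+)$-property of $-\Delta_p$: combining the above display with the fact that $\int_\Omega |\nabla u|^{p-2}\, \nabla u \cdot \nabla (u_j - u)\, dx \to 0$ (by weak convergence, since $|\nabla u|^{p-2}\nabla u \in L^{p/(p-1)}(\Omega)^N$), we obtain
\[
\int_\Omega \bigl(|\nabla u_j|^{p-2}\, \nabla u_j - |\nabla u|^{p-2}\, \nabla u\bigr) \cdot \nabla (u_j - u)\, dx \to 0.
\]
By the standard monotonicity inequalities for the $p$-Laplacian (using $(|x|^{p-2}x - |y|^{p-2}y)\cdot(x-y) \ge c_p |x-y|^p$ for $p \ge 2$, and a more delicate H\"older argument when $1 < p < 2$), this forces $\pnorm[p]{\nabla u_j - \nabla u} \to 0$, i.e., $u_j \to u$ in $W^{1,p}_0(\Omega)$.

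The only nontrivial step is bounding the nonlinear term, and here Lemma \ref{Lemma 2.2} does precisely the work needed: it reduces the estimate to a factor $\pnorm[b_i]{u_j - u}$ with $b_i < p^\ast$, so that the compactness of the Sobolev embedding takes over. The rest is the routine $(S_+)$-argument.
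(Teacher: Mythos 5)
Your proof is correct and follows essentially the same route as the paper: extract a weakly convergent subsequence, use Lemma \ref{Lemma 2.2} to show the nonlinear term tested against $u_j - u$ vanishes, and conclude via the $(S)_+$ property of the $p$-Laplacian. The only difference is cosmetic --- you unpack the monotonicity argument behind the $(S)_+$ property, which the paper simply cites.
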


\begin{proof}
By Lemma \ref{Lemma 2.2},
\begin{equation} \label{2.3}
\abs{\int_\Omega f(x,u_j)\, (u_j - u)\, dx} \le \sum_{i=1}^n \int_\Omega K_i(x)\, |u_j|^{q_i - 1}\, |u_j - u|\, dx \le C \sum_{i=1}^n \norm{u_j}^{q_i - 1} \pnorm[b_i]{u_j - u},
\end{equation}
where each $b_i < p^\ast$. Since $\seq{u_j}$ is bounded in $W^{1,\, p}_0(\Omega)$, a renamed subsequence converges to some $u$ weakly in $W^{1,\, p}_0(\Omega)$ and strongly in $L^{b_i}(\Omega)$ for $i = 1,\dots,n$. Then
\[
\int_\Omega |\nabla u_j|^{p-2}\, \nabla u_j \cdot \nabla (u_j - u)\, dx = \dualp{\Phi'(u_j)}{(u_j - u)} + \int_\Omega f(x,u_j)\, (u_j - u)\, dx \to 0
\]
by \eqref{2.3}, and hence $u_j \to u$ in $W^{1,\, p}_0(\Omega)$ by the (S)$_+$ property of the $p$-Laplacian.
\end{proof}

\section{An eigenvalue problem} \label{Section 3}

In this section we consider the eigenvalue problem
\begin{equation} \label{3.1}
\left\{\begin{aligned}
- \Delta_p\, u & = \lambda\, h(x)\, |u|^{q-2}\, u && \text{in } \Omega\\[10pt]
u & = 0 && \text{on } \bdry{\Omega},
\end{aligned}\right.
\end{equation}
where $q \in [1,p^\ast)$ and $h \in \A_q$ is possibly sign-changing. We assume that $h$ is positive on a set of positive measure and seek positive eigenvalues.

Let
\[
I(u) = \int_\Omega \frac{1}{p}\, |\nabla u|^p\, dx, \quad J(u) = \int_\Omega \frac{1}{q}\, h(x)\, |u|^q\, dx, \quad u \in W^{1,\, p}_0(\Omega),
\]
and set
\[
\Psi(u) = \frac{1}{J(u)}, \quad u \in \M = \bgset{u \in W^{1,\, p}_0(\Omega) : I(u) = 1 \text{ and } J(u) > 0}.
\]
Then $\M$ is nonempty, and positive eigenvalues and associated eigenfunctions of problem \eqref{3.1} on $\M$ coincide with critical values and critical points of $\Psi$, respectively. By Lemma \ref{Lemma 2.2},
\[
0 < J(u) \le C \norm{u}^q \le C \quad \forall u \in \M
\]
and hence $\lambda_1 := \inf_{u \in \M}\, \Psi(u) > 0$.

\begin{lemma}
For all $c \in \R$, $\Psi$ satisfies the {\em \PS{c}} condition, i.e., every sequence $\seq{u_j} \subset \M$ such that $\Psi(u_j) \to c$ and $\Psi'(u_j) \to 0$ has a subsequence that converges to some $u \in \M$.
\end{lemma}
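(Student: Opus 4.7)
The plan is to proceed in four stages, exploiting the homogeneity of $I$ and $J$, the compactness from Lemma \ref{Lemma 2.2}, and the $(S)_+$ property of the $p$-Laplacian.

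First I would extract a weakly convergent subsequence. Since $u_j \in \M$, we have $\norm{u_j}^p = pI(u_j) = p$, so $\seq{u_j}$ is bounded in $W^{1,p}_0(\Omega)$. By reflexivity and Lemma \ref{Lemma 2.2} applied to the weight $h \in \A_q$ (which gives a continuous embedding into some $L^b(\Omega)$ with $b < p^\ast$, and hence by Rellich--Kondrachov a compact embedding), I may pass to a renamed subsequence with $u_j \rightharpoonup u$ in $W^{1,p}_0(\Omega)$ and $u_j \to u$ in $L^b(\Omega)$.

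Next I would unpack the condition $\Psi'(u_j) \to 0$ on $\M$. Since $\M$ is the level set $\set{I = 1}$ and $I'(u) \neq 0$ for $u \neq 0$, the Lagrange multiplier rule produces $\mu_j \in \R$ with
\[
-\frac{J'(u_j)}{J(u_j)^2} - \mu_j\, I'(u_j) \to 0 \quad \text{in } W^{-1,p'}(\Omega).
\]
Testing with $u_j$ and using the homogeneity identities $\dualp{I'(u_j)}{u_j} = p$ and $\dualp{J'(u_j)}{u_j} = q J(u_j)$ shows $\mu_j \to -q c/p$. Since $\Psi \ge \lambda_1 > 0$ on $\M$ we have $c \ge \lambda_1 > 0$, so the limiting multiplier is nonzero and, in particular, $\mu_j$ is bounded away from $0$ for large $j$.

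Now I would test with $u_j - u$. The $J'$-term equals $\int_\Omega h(x)\, |u_j|^{q-2}\, u_j\, (u_j - u)\, dx$, which by Lemma \ref{Lemma 2.2} is dominated by $C \norm{u_j}^{q-1} \pnorm[b]{u_j - u} \to 0$. Combined with boundedness of $1/J(u_j)^2$, this forces
\[
\mu_j \int_\Omega |\nabla u_j|^{p-2}\, \nabla u_j \cdot \nabla (u_j - u)\, dx \to 0,
\]
and dividing by $\mu_j$ (nonzero in the limit) gives $\dualp{I'(u_j)}{u_j - u} \to 0$. The $(S)_+$ property of $-\Delta_p$ then upgrades weak to strong convergence, $u_j \to u$ in $W^{1,p}_0(\Omega)$. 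Continuity of $I$ and $J$ yields $I(u) = 1$ and $J(u) = 1/c > 0$, so $u \in \M$, as required.

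The main obstacle I anticipate is the usual subtlety when differentiating a constrained functional: one must verify that the Lagrange multiplier $\mu_j$ stays bounded away from $0$ so that the information extracted from the perturbed Euler equation actually controls $\dualp{I'(u_j)}{u_j - u}$. This is where the positivity of the ground-state level $\lambda_1$ enters crucially; without the strict inequality $c \ge \lambda_1 > 0$ the argument would collapse.
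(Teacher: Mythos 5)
Your proposal is correct and follows essentially the same route as the paper: extract a weakly convergent subsequence from the bound $I(u_j)=1$, use the Lagrange multiplier formulation of $\Psi'(u_j)\to 0$ together with the homogeneity identities to show the multiplier converges to a nonzero limit ($qc/p$ up to your sign convention), control the $J'$-term via Lemma \ref{Lemma 2.2} and the compact imbedding into $L^b$, and conclude by the $(S)_+$ property, with $u\in\M$ following from $J(u)=1/c>0$. The only difference is cosmetic (your sign convention for $\mu_j$ and the explicit division by $\mu_j$, which the paper performs implicitly).
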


\begin{proof}
We have $c \ge \lambda_1$, and there is a sequence $\seq{\mu_j} \subset \R$ such that
\begin{equation} \label{3.2}
\mu_j\, I'(u_j) - \frac{J'(u_j)}{J(u_j)^2} \to 0.
\end{equation}
Since $\dualp{I'(u_j)}{u_j} = p\, I(u_j) = p$, $\dualp{J'(u_j)}{u_j} = q\, J(u_j)$, and $J(u_j) \to 1/c$, then $\mu_j \to qc/p > 0$. By Lemma \ref{Lemma 2.2},
\begin{equation} \label{3.3}
\abs{\dualp{J'(u_j)}{(u_j - u)}} \le \int_\Omega |h(x)|\, |u_j|^{q-1}\, |u_j - u|\, dx \le C \norm{u_j}^{q-1} \pnorm[b]{u_j - u},
\end{equation}
where $b < p^\ast$. Since $\seq{u_j}$ is bounded in $W^{1,\, p}_0(\Omega)$, a renamed subsequence converges to some $u$ weakly in $W^{1,\, p}_0(\Omega)$ and strongly in $L^b(\Omega)$. Then
\[
\int_\Omega |\nabla u_j|^{p-2}\, \nabla u_j \cdot \nabla (u_j - u)\, dx = \dualp{I'(u_j)}{(u_j - u)} \to 0
\]
by \eqref{3.2} and \eqref{3.3}, and hence $u_j \to u$ in $W^{1,\, p}_0(\Omega)$ by the (S)$_+$ property of the $p$-Laplacian. By continuity, $J(u) = 1/c > 0$ and hence $u \in \M$.
\end{proof}

We can now define an increasing and unbounded sequence of critical values of $\Psi$ via a minimax scheme. Although the standard scheme is based on the Krasnosel$'$ski\u\i's genus, here we use a cohomological index as in Perera \cite{MR1998432}. This gives additional topological information about the associated critical points that is often useful in applications.

Let us recall the definition of the $\Z_2$-cohomological index of Fadell and Rabinowitz \cite{MR57:17677}. Let $W$ be a Banach space. For a symmetric subset $M$ of $W \setminus \set{0}$, let $\overline{M} = M/\Z_2$ be the quotient space of $M$ with each $u$ and $-u$ identified, let $f : \overline{M} \to \RP^\infty$ be the classifying map of $\overline{M}$, and let $f^\ast : H^\ast(\RP^\infty) \to H^\ast(\overline{M})$ be the induced homomorphism of the Alexander-Spanier cohomology rings. Then the cohomological index of $M$ is defined by
\[
i(M) = \begin{cases}
\sup\, \bgset{m \ge 1 : f^\ast(\omega^{m-1}) \ne 0}, & M \ne \emptyset\\[5pt]
0, & M = \emptyset,
\end{cases}
\]
where $\omega \in H^1(\RP^\infty)$ is the generator of the polynomial ring $H^\ast(\RP^\infty) = \Z_2[\omega]$. For example, the classifying map of the unit sphere $S^{m-1}$ in $\R^m$, $m \ge 1$, is the inclusion $\RP^{m-1} \incl \RP^\infty$, which induces isomorphisms on $H^q$ for $q \le m-1$, so $i(S^{m-1}) = m$.

Let $\F$ denote the class of symmetric subsets of $\M$, and set
\[
\lambda_k := \inf_{\substack{M \in \F\\[1pt]
i(M) \ge k}}\, \sup_{u \in M}\, \Psi(u), \quad k \ge 1.
\]
Then $\seq{\lambda_k}$ is a sequence of positive eigenvalues of \eqref{3.1}, $\lambda_k \nearrow + \infty$, and
\begin{equation} \label{3.4}
i(\bgset{u \in \M : \Psi(u) \le \lambda_k}) = i(\bgset{u \in \M : \Psi(u) < \lambda_{k+1}}) = k
\end{equation}
if $\lambda_k < \lambda_{k+1}$ (see Perera, Agarwal, and O'Regan \cite[Propositions 3.52 and 3.53]{MR2640827}).

As an application consider the pure power problem
\begin{equation} \label{3.5}
\left\{\begin{aligned}
- \Delta_p\, u & = h(x)\, |u|^{q-2}\, u && \text{in } \Omega\\[10pt]
u & = 0 && \text{on } \bdry{\Omega}.
\end{aligned}\right.
\end{equation}

\begin{theorem}
If $q \in [1,p^\ast)$, $q \ne p$, and $h \in \A_q$ is positive on a set of positive measure, then problem \eqref{3.5} has a sequence of nontrivial weak solutions $\seq{u_k}$ such that
\begin{enumarab}
\item if $q < p$, then $\norm{u_k} \to 0$;
\item if $q > p$, then $\norm{u_k} \to \infty$.
\end{enumarab}
\end{theorem}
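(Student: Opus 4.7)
The plan is to obtain the solutions by rescaling the eigenfunctions of problem \eqref{3.1} corresponding to the sequence of eigenvalues $\seq{\lambda_k}$ constructed in the preceding discussion. For each $k$, fix an eigenfunction $w_k \in \M$ of \eqref{3.1} at the level $\lambda_k$, which exists because the minimax values are critical values of $\Psi$ by the \PS{c} condition established above. Then
\[
-\Delta_p\, w_k = \lambda_k\, h(x)\, |w_k|^{q-2}\, w_k, \qquad I(w_k) = 1.
\]
Set $u_k = t_k\, w_k$ for a scalar $t_k > 0$ to be chosen. Using the $(p-1)$-homogeneity of $\Delta_p$ and the $(q-1)$-homogeneity of the right-hand side,
\[
-\Delta_p(t_k\, w_k) = t_k^{p-1}\, (-\Delta_p\, w_k) = \lambda_k\, t_k^{p-q}\, h(x)\, |u_k|^{q-2}\, u_k,
\]
so $u_k$ is a weak solution of \eqref{3.5} precisely when $\lambda_k\, t_k^{p-q} = 1$, i.e., when $t_k = \lambda_k^{1/(q-p)}$. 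This choice is legitimate since $q \ne p$ and $\lambda_k > 0$.

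Next I would read off the norms directly from the scaling. The constraint $I(w_k) = 1$ gives $\norm{w_k}^p = p$, hence
\[
\norm{u_k} = t_k\, \norm{w_k} = p^{1/p}\, \lambda_k^{1/(q-p)}.
\]
Since $\lambda_k \nearrow +\infty$, the exponent $1/(q-p)$ is negative in case $(1)$ and positive in case $(2)$, giving $\norm{u_k} \to 0$ and $\norm{u_k} \to \infty$, respectively. Nontriviality is immediate from $w_k \in \M$, which forces $w_k \ne 0$.

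The only subtle point is that the $\lambda_k$ need not be strictly increasing, so a priori the sequence $\seq{u_k}$ might contain repetitions. To guarantee infinitely many \emph{distinct} solutions, I would pass to a subsequence $\seq{\lambda_{k_j}}$ on which the eigenvalues are strictly increasing, which is possible because $\lambda_k \to \infty$; the formula $\norm{u_k} = p^{1/p}\, \lambda_k^{1/(q-p)}$ then shows that the corresponding $u_{k_j}$ have pairwise distinct norms, and in particular are pairwise distinct. This bookkeeping is the only real obstacle; the analytic content — existence of the unbounded eigenvalue sequence and the associated eigenfunctions — is already packaged in the minimax construction and the \PS{c} condition proved earlier.
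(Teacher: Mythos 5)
Your proposal is correct and is essentially the paper's own argument: fix an eigenfunction $v_k \in \M$ with $\Psi(v_k) = \lambda_k$, rescale by $\lambda_k^{1/(q-p)}$ using homogeneity, and read off $\norm{u_k} = \lambda_k^{1/(q-p)}\, p^{1/p}$ from $I(v_k) = 1$. The extra remark about passing to a strictly increasing subsequence of eigenvalues to ensure distinctness is a reasonable bit of bookkeeping that the paper leaves implicit.
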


\begin{proof}
Let $v_k$ be a critical point of $\Psi$ with $\Psi(v_k) = \lambda_k$. Then $u_k := \lambda_k^{1/(q-p)}\, v_k$ solves \eqref{3.5}, and $\norm{u_k} = \lambda_k^{1/(q-p)}\, p^{1/p}$ since $I(v_k) = 1$.
\end{proof}

In the next section we will use the index information in \eqref{3.4} to compute certain critical groups when $q = p$.

\section{Critical groups} \label{Section 4}

In this section we consider the problem
\begin{equation} \label{4.1}
\left\{\begin{aligned}
- \Delta_p\, u & = \lambda\, h(x)\, |u|^{p-2}\, u + g(x,u) && \text{in } \Omega\\[10pt]
u & = 0 && \text{on } \bdry{\Omega},
\end{aligned}\right.
\end{equation}
where $\lambda \ge 0$ is a parameter, $h \in \A_p$ is positive on a set of positive measure, and $g$ is a Carath\'{e}odory function on $\Omega \times \R$ satisfying the growth condition
\begin{equation} \label{4.2}
|g(x,t)| \le \sum_{i=1}^n K_i(x)\, |t|^{q_i - 1} \quad \text{for a.a. $x \in \Omega$ and all $t \in \R$}
\end{equation}
for some $q_i \in (p,p^\ast)$ and $K_i \in \A_{q_i}$. Problem \eqref{4.1} has the trivial solution $u = 0$, and we study the critical groups of the associated functional
\[
\Phi(u) = \int_\Omega \left[\frac{1}{p}\, |\nabla u|^p - \frac{1}{p}\, \lambda\, h(x)\, |u|^p - G(x,u)\right] dx, \quad u \in W^{1,\, p}_0(\Omega),
\]
where $G(x,t) = \int_0^t g(x,\tau)\, d\tau$, at $0$.

Let us recall that the critical groups of $\Phi$ at $0$ are given by
\begin{equation} \label{4.3}
C^q(\Phi,0) = H^q(\Phi^0 \cap U,\Phi^0 \cap U \setminus \set{0}), \quad q \ge 0,
\end{equation}
where $\Phi^0 = \bgset{u \in W^{1,\, p}_0(\Omega) : \Phi(u) \le 0}$, $U$ is any neighborhood of $0$, and $H$ denotes Alexander-Spanier cohomology with $\Z_2$-coefficients. They are independent of $U$ by the excision property of the cohomology groups. They are also invariant under homotopies that preserve the isolatedness of the critical point by the following proposition (see Chang and Ghoussoub \cite{MR1422006} or Corvellec and Hantoute \cite{MR1926378}).

\begin{proposition} \label{Proposition 4.1}
Let $\Phi_s,\, s \in [0,1]$ be a family of $C^1$-functionals on a Banach space $W$ such that $0$ is a critical point of each $\Phi_s$. If there is a closed neighborhood $U$ of $0$ such that
\begin{enumarab}
\item each $\Phi_s$ satisfies the {\em \PS{}} condition over $U$,
\item $U$ contains no other critical point of any $\Phi_s$,
\item the map $[0,1] \to C^1(U,\R),\, s \mapsto \Phi_s$ is continuous,
\end{enumarab}
then $C_q(\Phi_0,0) \isom C_q(\Phi_1,0)$ for all $q$.
\end{proposition}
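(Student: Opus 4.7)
The plan is to establish local constancy of $s \mapsto C^q(\Phi_s, 0)$ on $[0,1]$; by connectedness of the parameter interval this yields the desired isomorphism between $s = 0$ and $s = 1$. After the harmless constant shift $\Phi_s \leftarrow \Phi_s - \Phi_s(0)$, which affects neither the hypotheses nor the critical groups at $0$, I may assume $\Phi_s(0) = 0$ for every $s \in [0,1]$.

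Fix $s_0 \in [0,1]$. First I would produce $0 < r < \rho$ with $\overline{B_\rho(0)} \subset U$ such that
\[
\delta := \inf \set{\norm{\Phi_{s_0}'(u)} : u \in \overline{B_\rho(0)},\ \norm{u} \geq r} > 0;
\]
a zero infimum would, by the (PS) condition for $\Phi_{s_0}$, yield a second critical point of $\Phi_{s_0}$ in $\overline{B_\rho(0)} \setminus B_r(0)$, contradicting hypothesis (2). By the $C^1(U,\R)$-continuity in (3), there is an interval $I_{s_0} \ni s_0$ on which both $\norm[\infty]{\Phi_s' - \Phi_{s_0}'}$ and $\norm[\infty]{\Phi_s - \Phi_{s_0}}$ on $U$ can be made arbitrarily small; in particular $\overline{B_\rho(0)}$ remains an isolating neighborhood of $0$ for every such $\Phi_s$, with $\norm{\Phi_s'(u)} \geq \delta/2$ on $\overline{B_\rho(0)} \setminus B_r(0)$. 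A locally Lipschitz pseudo-gradient vector field $V$ for $\Phi_{s_0}$ on $\overline{B_\rho(0)} \setminus \set{0}$ is then, by openness of the pseudo-gradient inequalities and after possibly shrinking $I_{s_0}$, a pseudo-gradient for every $\Phi_s$ with $s \in I_{s_0}$.

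Running the negative flow of a suitable cut-off of $V$, a standard Second Deformation Lemma argument together with the uniform bound $\abs{\Phi_s(u) - \Phi_{s_0}(u)} < \epsilon$ on $U$ yields, for $s \in I_{s_0}$ and small $\epsilon > 0$, a sandwich
\[
\Phi_{s_0}^{-\epsilon} \cap \overline{B_r(0)} \subset \Phi_s^0 \cap \overline{B_r(0)} \subset \Phi_{s_0}^{\epsilon} \cap \overline{B_r(0)}
\]
whose outer inclusions are homotopy equivalences of pairs (after removing $\set{0}$), because no critical value of $\Phi_{s_0}$ other than $0$ lies in $[-\epsilon,\epsilon]$. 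Hence $C^q(\Phi_s, 0) \isom C^q(\Phi_{s_0}, 0)$ for $s \in I_{s_0}$, and a finite cover of $[0,1]$ closes the argument.

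The main technical obstacle is producing a single pseudo-gradient flow that simultaneously descends every $\Phi_s$ in a neighborhood of $s_0$ and converts the uniform closeness $\abs{\Phi_s - \Phi_{s_0}} < \epsilon$ into genuine homotopy equivalences at the sublevel-set pair level; this uniform deformation, together with the verification that the isolating block $\overline{B_\rho(0)}$ remains isolating under small $C^1$ perturbations, is precisely what the arguments of Chang--Ghoussoub \cite{MR1422006} and Corvellec--Hantoute \cite{MR1926378} cited by the authors accomplish, and I would invoke them to control these final continuity estimates.
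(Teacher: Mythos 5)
The paper does not actually prove Proposition \ref{Proposition 4.1}; it is quoted from Chang--Ghoussoub \cite{MR1422006} and Corvellec--Hantoute \cite{MR1926378}, so there is no internal argument to compare yours against line by line. Your sketch follows the standard route those references take: local constancy of $s \mapsto C^q(\Phi_s,0)$ via an isolating ball, a uniform lower bound on $\norm{\Phi_s'}$ over an annulus for $s$ near $s_0$, a common pseudo-gradient, and a sandwich of sublevel sets; and since you end by invoking the same two references for the technical core, your proposal is consistent with how the paper treats the statement.

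There is, however, one genuine gap in the step you present as if it closes the argument. From a single chain of inclusions $\Phi_{s_0}^{-\epsilon}\cap \overline{B_r} \incl \Phi_s^{0}\cap \overline{B_r} \incl \Phi_{s_0}^{\epsilon}\cap \overline{B_r}$ in which the two outer sets are homotopy equivalent, you cannot conclude anything about the cohomology of the middle set: $A \incl B \incl C$ with $A \hookrightarrow C$ inducing an isomorphism on $H^\ast$ forces nothing about $B$. Moreover, the outer inclusions relate sublevel sets of \emph{different} functionals, so no deformation lemma applies to them directly. The standard repair is an alternating double sandwich, e.g.\ $\Phi_s^{-2\epsilon} \incl \Phi_{s_0}^{-\epsilon} \incl \Phi_s^{0} \incl \Phi_{s_0}^{\epsilon}$ (valid when $\abs{\Phi_s-\Phi_{s_0}}<\epsilon$ on $U$), in which the second deformation lemma is applied to \emph{both} $\Phi_{s_0}$ \emph{and} $\Phi_s$ --- legitimate, since hypotheses $(1)$ and $(2)$ give \PS{} and absence of other critical points for every $\Phi_s$ --- so that two of the three composite inclusions induce isomorphisms and the remaining inclusion $\Phi_{s_0}^{-\epsilon}\hookrightarrow\Phi_s^{0}$ is forced to as well. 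You must also arrange that all deformations respect the pair $\left(\Phi^0\cap \overline{B_r},\,\Phi^0\cap \overline{B_r}\setminus\set{0}\right)$, since $0$ is a critical point at level $0$ of every $\Phi_s$ and the critical groups are relative groups; this bookkeeping is exactly what the Gromoll--Meyer-pair formulation of \cite{MR1422006} and the quantitative deformation estimates of \cite{MR1926378} supply, so deferring to them is acceptable, but the sandwich as you wrote it does not by itself yield $C^q(\Phi_s,0)\isom C^q(\Phi_{s_0},0)$.
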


In the absence of a direct sum decomposition, the main technical tool we use to get an estimate of the critical groups is the notion of a cohomological local splitting introduced in Perera, Agarwal, and O'Regan \cite{MR2640827}, which is a variant of the homological local linking of Perera \cite{MR1700283} (see also Li and Willem \cite{MR1312028}). The following slightly different form of this notion was given in Degiovanni, Lancelotti, and Perera \cite{MR2661274}.

\begin{definition}
We say that a $C^1$-functional $\Phi$ on a Banach space $W$ has a cohomological local splitting near $0$ in dimension $k \ge 1$ if there are symmetric cones $W_\pm \subset W$ with $W_+ \cap W_- = \set{0}$ and $\rho > 0$ such that
\begin{equation} \label{4.4}
i(W_- \setminus \set{0}) = i(W \setminus W_+) = k
\end{equation}
and
\begin{equation} \label{4.5}
\Phi(u) \le \Phi(0) \quad \forall u \in B_\rho \cap W_-, \qquad \Phi(u) \ge \Phi(0) \quad \forall u \in B_\rho \cap W_+,
\end{equation}
where $B_\rho = \bgset{u \in W : \norm{u} \le \rho}$.
\end{definition}

\begin{proposition}[{Degiovanni, Lancelotti, and Perera \cite[Proposition 2.1]{MR2661274}}] \label{Proposition 4.2}
If $\Phi$ has a cohomological local splitting near $0$ in dimension $k$, and $0$ is an isolated critical point of $\Phi$, then $C^k(\Phi,0) \ne 0$.
\end{proposition}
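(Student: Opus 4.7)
After translating so that $\Phi(0)=0$, take $U = B_\rho$ with $\rho$ from the cohomological local splitting, and further shrink $\rho$ so that $0$ is the only critical point of $\Phi$ in $\overline{B_\rho}$ (permissible by isolation). Set $A = \Phi^0 \cap B_\rho$ and $A_\ast = A \setminus \set{0}$; the goal is to produce a nonzero class in $H^k(A,A_\ast) = C^k(\Phi,0)$.

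My strategy is to bracket the symmetrization $A_\ast \cup (-A_\ast)$ between two symmetric sets of Fadell--Rabinowitz index equal to $k$, and then convert this identity into the desired relative cohomology class. From the splitting condition \eqref{4.5}, $W_- \cap B_\rho \setminus \set{0} \subset A_\ast$; since $W_-$ is a cone, radial scaling gives an odd homotopy equivalence $W_- \setminus \set{0} \simeq W_- \cap B_\rho \setminus \set{0}$, so by invariance and monotonicity of $i$,
\[
k = i(W_- \setminus \set{0}) = i(W_- \cap B_\rho \setminus \set{0}) \le i\!\left(A_\ast \cup (-A_\ast)\right).
\]
For the matching upper bound, $\Phi \ge 0$ on $W_+ \cap B_\rho$ and $0$ is the only critical point of $\Phi$ in $\overline{B_\rho}$, so the negative pseudo-gradient flow---rendered $\Z_2$-equivariant by averaging against $u \mapsto -u$---strictly decreases $\Phi$ on $(W_+ \cap B_\rho) \setminus \set{0}$ and hence deforms $A_\ast \cup (-A_\ast)$ into $B_\rho \setminus W_+$ in positive time, giving
\[
i\!\left(A_\ast \cup (-A_\ast)\right) \le i\!\left(B_\rho \setminus W_+\right) = i(W \setminus W_+) = k.
\]

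With $i\!\left(A_\ast \cup (-A_\ast)\right) = k$ pinned down, the Fadell--Rabinowitz pullback of $\omega^{k-1}$ furnishes a nontrivial class on the $\Z_2$-quotient of the symmetrization. Transferring this to the ordinary Alexander--Spanier cohomology of the non-symmetric pair $(A,A_\ast)$---using the naturality of the classifying map together with a local flow-based retraction of $A$ onto $\set{0}$ that kills $H^{k-1}(A)$ and $H^k(A)$---the connecting homomorphism in the long exact sequence of $(A,A_\ast)$ then sends it to a nontrivial class in $H^k(A,A_\ast) = C^k(\Phi,0)$.

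\textbf{Main obstacle.} The technical heart is arranging the negative pseudo-gradient flow in the second step to be simultaneously $\Z_2$-equivariant (so that the cohomological index applies) and effective at pushing $A_\ast \cup (-A_\ast)$ out of $W_+$ in strictly positive time while keeping $\Phi$ monotone; the isolatedness of $0$ is exactly what makes both achievable. A secondary but genuine subtlety is the transfer of the index equality, which naturally lives on the $\Z_2$-quotient of the symmetrization, back to the ordinary Alexander--Spanier cohomology of the non-symmetric pair that defines the critical groups, for which one leans on the naturality of the classifying map and the standard properties of the index recorded in \cite{MR57:17677}.
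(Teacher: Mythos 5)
First, a point of comparison that matters for your task: the paper does not prove Proposition \ref{Proposition 4.2} at all. It is imported verbatim from Degiovanni, Lancelotti, and Perera \cite[Proposition 2.1]{MR2661274}, so there is no in-paper argument to measure your proposal against; I will measure it against what is actually needed to make the statement true.

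Your outline has two genuine gaps. The first is the equivariant deformation behind the upper bound $i(A_\ast \cup (-A_\ast)) \le i(W \setminus W_+)$. The functional $\Phi$ here is not even -- that is the whole reason the cohomological local splitting machinery exists -- so "averaging against $u \mapsto -u$" produces an odd vector field $\tilde V(u) = \tfrac12\bigl(V(u) - V(-u)\bigr)$ that is no longer a pseudo-gradient for $\Phi$: the term involving $V(-u)$ is completely uncontrolled, and the resulting flow need not decrease $\Phi$ anywhere. The non-equivariant flow does not help either, because the reflected piece $-A_\ast$ is governed by $\Phi(-u)$, not by $\Phi(u)$. So the claimed $\Z_2$-equivariant, $\Phi$-decreasing deformation of $A_\ast \cup (-A_\ast)$ into $B_\rho \setminus W_+$ does not exist in general, and the upper bound -- hence the pinning $i(A_\ast\cup(-A_\ast)) = k$ -- is unproved. (A secondary issue: with the non-strict inequalities in \eqref{4.5}, points of $A_\ast \cap W_+$ lie exactly on the level $\Phi = \Phi(0)$, and a short-time flow may exit $B_\rho$ rather than exit $W_+$.)

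The second gap is more fundamental: the final "transfer" step fails as described. Even granting $i(A_\ast\cup(-A_\ast)) = k$, this is a statement about the $\Z_2$-quotient $\overline{M}$ of a symmetric superset $M$ of $A_\ast$, and the mechanism you propose -- pulling the class $f^\ast(\omega^{k-1})$ back from $\overline{M}$ to $M$ by naturality -- yields the zero class whenever $k \ge 2$: the pullback of $\omega$ under $M \to \overline{M} \to \RP^\infty$ classifies the pullback to $M$ of the double cover $M \to \overline{M}$, which is canonically trivial, so $\omega$ and therefore all of its powers pull back to $0$ in $H^\ast(M)$. Converting the index identities \eqref{4.4} into a nonzero class in the \emph{ordinary} cohomology of the non-symmetric pair $(A, A_\ast)$ is precisely the hard point of \cite[Proposition 2.1]{MR2661274}; there the argument does not symmetrize $\Phi^0$ but compares the long exact sequences of the inclusion of pairs $(W_-\cap B_\rho,\, W_-\cap B_\rho\setminus\set{0}) \incl (\Phi^0\cap B_\rho,\, \Phi^0\cap B_\rho\setminus\set{0})$ and uses the \emph{equality} of the two indices in \eqref{4.4} through the exact sequence of the double cover (the key fact being that $i(M) = k$ exactly forces $\omega^{k-1}$ into the image of the transfer map, which is what produces a nonzero class in $H^{k-1}$ of the space itself rather than of its quotient). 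Your proposal does not contain a working substitute for this step, so it does not reach the conclusion $C^k(\Phi,0) \ne 0$.
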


Let $\lambda_k \nearrow + \infty$ be the sequence of positive eigenvalues of the problem
\begin{equation} \label{4.6}
\left\{\begin{aligned}
- \Delta_p\, u & = \lambda\, h(x)\, |u|^{p-2}\, u && \text{in } \Omega\\[10pt]
u & = 0 && \text{on } \bdry{\Omega}
\end{aligned}\right.
\end{equation}
that was constructed in the last section. The main result of this section is the following theorem.

\begin{theorem} \label{Theorem 4.4}
Assume that $h \in \A_p$ is positive on a set of positive measure, $g$ satisfies \eqref{4.2} for some $q_i \in (p,p^\ast)$ and $K_i \in \A_{q_i}$ for $i = 1,\dots,n$, and $0$ is an isolated critical point of $\Phi$.
\begin{enumarab}
\item $C^0(\Phi,0) \isom \Z_2$ and $C^q(\Phi,0) = 0$ for $q \ge 1$ in the following cases:
\begin{enumroman}
\item $0 \le \lambda < \lambda_1$;
\item $\lambda = \lambda_1$ and $G(x,t) \le 0$ for a.a. $x \in \Omega$ and all $t \in \R$.
\end{enumroman}
\item $C^k(\Phi,0) \ne 0$ in the following cases:
\begin{enumroman}
\item $\lambda_k < \lambda < \lambda_{k+1}$;
\item $\lambda = \lambda_k < \lambda_{k+1}$ and $G(x,t) \ge 0$ for a.a. $x \in \Omega$ and all $t \in \R$;
\item $\lambda_k < \lambda_{k+1} = \lambda$ and $G(x,t) \le 0$ for a.a. $x \in \Omega$ and all $t \in \R$.
\end{enumroman}
\end{enumarab}
\end{theorem}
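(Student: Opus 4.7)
The overall plan is to handle part $(1)$ by showing that $0$ is a local minimum of $\Phi$, and part $(2)$ by exhibiting a cohomological local splitting of $\Phi$ near $0$ in dimension $k$ and invoking Proposition~\ref{Proposition 4.2}. Two ingredients are used throughout: the variational characterization $\lambda_1 = \inf_\M \Psi$ and the index identity \eqref{3.4} supply the topological input, while the bound $\abs{\int_\Omega G(x,u)\, dx} \le C \sum_{i=1}^n \norm{u}^{q_i}$ from \eqref{4.2} and Lemma~\ref{Lemma 2.2}, with all $q_i > p$, is the analytic input that makes $G$ a higher-order perturbation, i.e.\ $\int_\Omega G(x,u)\, dx = o(\norm{u}^p)$ as $u \to 0$.

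For part $(1)$, I first establish
\[
\int_\Omega |\nabla u|^p\, dx - \lambda \int_\Omega h(x)\, |u|^p\, dx \ge \left(1 - \frac{\lambda}{\lambda_1}\right) \norm{u}^p \quad \text{for all } u \in W^{1,\, p}_0(\Omega) \text{ and } \lambda \in [0,\lambda_1],
\]
by splitting on the sign of $J(u)$ and using $\lambda_1 = \inf_\M \Psi$ when $J(u) > 0$. Combined with the $o(\norm{u}^p)$ estimate for $\int_\Omega G\, dx$, this yields $\Phi(u) \ge 0 = \Phi(0)$ on a small ball in case $(i)$ (strict gap) and in case $(ii)$ (vanishing coefficient absorbed by $G \le 0$). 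Since $0$ is an isolated critical point, the standard Morse-theoretic identification of the critical groups at an isolated local minimum gives $C^0(\Phi,0) \isom \Z_2$ and $C^q(\Phi,0) = 0$ for $q \ge 1$.

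For part $(2)$, I would define the symmetric cones
\[
W_- = \set{0} \cup \bgset{u : J(u) > 0,\ \lambda_k\, J(u) \ge I(u)}, \qquad W_+ = \bgset{u : \lambda_{k+1}\, J(u) \le I(u)},
\]
which are symmetric and positively homogeneous by the evenness and homogeneity of $I$ and $J$, and which satisfy $W_- \cap W_+ = \set{0}$ since $\lambda_k < \lambda_{k+1}$ in all three subcases. The straight-line homotopy $u_t = ((1-t) + t\, I(u)^{-1/p})\, u$ stays in $W_- \setminus \set{0}$ (respectively in $W \setminus W_+$, which is contained in $\set{J > 0}$) and retracts each set odd-equivariantly onto $\bgset{u \in \M : \Psi(u) \le \lambda_k}$ (respectively onto $\bgset{u \in \M : \Psi(u) < \lambda_{k+1}}$); hence \eqref{3.4} gives $i(W_- \setminus \set{0}) = i(W \setminus W_+) = k$, which is \eqref{4.4}. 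The definitions of $W_\pm$ also immediately yield the quadratic-part estimates $I(u) - \lambda J(u) \le (1 - \lambda/\lambda_k)\, I(u)$ on $W_-$ and $I(u) - \lambda J(u) \ge (1 - \lambda/\lambda_{k+1})\, I(u)$ on $W_+$ (the subcase $J(u) \le 0$ in $W_+$ being trivial since $\lambda \ge 0$). Combining these with $\int_\Omega G\, dx = o(\norm{u}^p)$ produces \eqref{4.5} on a small ball $B_\rho$: in case $(i)$ the strict gaps $\lambda > \lambda_k$ and $\lambda < \lambda_{k+1}$ provide both signs; in case $(ii)$ the $W_-$ estimate becomes $\Phi(u) \le -\int_\Omega G\, dx \le 0$ thanks to $G \ge 0$, while the $W_+$ side still has a strict gap; case $(iii)$ is symmetric. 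Proposition~\ref{Proposition 4.2} then gives $C^k(\Phi,0) \ne 0$.

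The main obstacle is the index calculation \eqref{4.4}: the cones $W_\pm$ must be chosen so that the sub- and super-level sets of $\Psi$ on $\M$ appearing in \eqref{3.4} arise as odd deformation retracts of $W_- \setminus \set{0}$ and $W \setminus W_+$, while the ``free'' region $\set{J \le 0}$ (where $\Phi \ge 0$ is automatic) is absorbed into $W_+$ without contaminating $W_-$ or destroying $W_- \cap W_+ = \set{0}$. The inequalities $\lambda_k\, J \ge I$ and $\lambda_{k+1}\, J \le I$ achieve exactly this balance, which is why a single pair of cones handles all three subcases in part $(2)$ once the sign of $G$ is brought in to cover the degenerate coefficients.
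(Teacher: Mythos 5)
Your proposal is correct and follows essentially the same route as the paper: the $o(\norm{u}^p)$ estimate on $\int_\Omega G(x,u)\,dx$, a local-minimum argument for part $(1)$ via $I \ge \lambda_1 J$, and for part $(2)$ the same cones $W_\pm$ defined by $I \le \lambda_k J$ and $I \ge \lambda_{k+1} J$, radially retracted onto the sublevel sets of $\Psi$ on $\M$ so that \eqref{3.4} yields \eqref{4.4}, followed by Proposition \ref{Proposition 4.2}. The only difference is that you spell out the radial retraction and the $J(u) \le 0$ region of $W_+$ explicitly, details the paper leaves implicit.
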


\begin{proof}
By \eqref{4.2} and Lemma \ref{Lemma 2.2},
\[
\abs{\int_\Omega G(x,u)\, dx} \le C \sum_{i=1}^n \norm{u}^{q_i} = \o(\norm{u}^p) \quad \text{as } \norm{u} \to 0
\]
since each $q_i > p$. So
\begin{equation} \label{4.7}
\Phi(u) = I(u) - \lambda\, J(u) + \o(\norm{u}^p) \quad \text{as } \norm{u} \to 0.
\end{equation}

(1) We show that $0$ is a local minimizer of $\Phi$. Since $\Psi(u) \ge \lambda_1$ for all $u \in \M$,
\begin{equation} \label{4.8}
I(u) \ge \lambda_1\, J(u) \quad \forall u \in W^{1,\, p}_0(\Omega).
\end{equation}

({\em i}) For sufficiently small $\rho > 0$,
\[
\Phi(u) \ge \left(1 - \frac{\lambda}{\lambda_1} + \o(1)\right) \frac{\norm{u}^p}{p} \ge 0 \quad \forall u \in B_\rho
\]
by \eqref{4.7} and \eqref{4.8}.

({\em ii}) We have
\[
\Phi(u) \ge - \int_\Omega G(x,u)\, dx \ge 0 \quad \forall u \in W^{1,\, p}_0(\Omega).
\]

(2) We show that $\Phi$ has a cohomological local splitting near $0$ in dimension $k$ and apply Proposition \ref{Proposition 4.2}. Let
\[
W_- = \bgset{u \in W^{1,\, p}_0(\Omega) : I(u) \le \lambda_k\, J(u)}, \qquad W_+ = \bgset{u \in W^{1,\, p}_0(\Omega) : I(u) \ge \lambda_{k+1}\, J(u)}.
\]
Then $W_- \setminus \set{0}$ and $W \setminus W_+$ radially deformation retract to $\bgset{u \in \M : \Psi(u) \le \lambda_k}$ and $\bgset{u \in \M : \Psi(u) < \lambda_{k+1}}$, respectively, so \eqref{4.4} holds by \eqref{3.4}. It only remains to show that \eqref{4.5} holds for sufficiently small $\rho > 0$.

({\em i}) For sufficiently small $\rho > 0$,
\[
\Phi(u) \le - \left(\frac{\lambda}{\lambda_k} - 1 + \o(1)\right) \frac{\norm{u}^p}{p} \le 0 \quad \forall u \in B_\rho \cap W_-
\]
and
\[
\Phi(u) \ge \left(1 - \frac{\lambda}{\lambda_{k+1}} + \o(1)\right) \frac{\norm{u}^p}{p} \ge 0 \quad \forall u \in B_\rho \cap W_+
\]
by \eqref{4.7}.

({\em ii}) We have
\[
\Phi(u) \le - \int_\Omega G(x,u)\, dx \le 0 \quad \forall u \in W_-,
\]
and for sufficiently small $\rho > 0$, $\Phi(u) \ge 0$ for all $u \in B_\rho \cap W_+$ as in ({\em i}).

({\em iii}) For sufficiently small $\rho > 0$, $\Phi(u) \le 0$ for all $u \in B_\rho \cap W_-$ as in ({\em i}), and
\[
\Phi(u) \ge - \int_\Omega G(x,u)\, dx \ge 0 \quad \forall u \in W_+. \QED
\]
\end{proof}

When $p > N$, it suffices to assume the sign conditions on $G$ in Theorem \ref{Theorem 4.4} for small $|t|$ by the imbedding $W^{1,\, p}_0(\Omega) \hookrightarrow L^\infty(\Omega)$, so we also have the following theorem.

\begin{theorem} \label{Theorem 4.5}
Assume that $p > N$, $h \in \A_p$ is positive on a set of positive measure, $g$ satisfies \eqref{4.2} for some $q_i \in (p,\infty)$ and $K_i \in \A_{q_i}$ for $i = 1,\dots,n$, and $0$ is an isolated critical point of $\Phi$.
\begin{enumarab}
\item $C^0(\Phi,0) \isom \Z_2$ and $C^q(\Phi,0) = 0$ for $q \ge 1$ if $\lambda = \lambda_1$ and, for some $\delta > 0$, $G(x,t) \le 0$ for a.a. $x \in \Omega$ and $|t| \le \delta$.
\item $C^k(\Phi,0) \ne 0$ in the following cases:
\begin{enumroman}
\item $\lambda = \lambda_k < \lambda_{k+1}$ and, for some $\delta > 0$, $G(x,t) \ge 0$ for a.a. $x \in \Omega$ and $|t| \le \delta$;
\item $\lambda_k < \lambda_{k+1} = \lambda$ and, for some $\delta > 0$, $G(x,t) \le 0$ for a.a. $x \in \Omega$ and $|t| \le \delta$.
\end{enumroman}
\end{enumarab}
\end{theorem}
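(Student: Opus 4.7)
The plan is to mimic the proof of Theorem \ref{Theorem 4.4} essentially verbatim, but to replace the global sign conditions on $G$ with pointwise inequalities guaranteed by the continuous embedding $W^{1,\, p}_0(\Omega) \hookrightarrow L^\infty(\Omega)$, which holds because $p > N$. I would fix a constant $C_\infty > 0$ with $\pnorm[\infty]{u} \le C_\infty \norm{u}$ for all $u \in W^{1,\, p}_0(\Omega)$, and set $\rho_0 = \delta/C_\infty$. Then for every $u \in B_{\rho_0}$ we have $|u(x)| \le \delta$ for a.a.\ $x \in \Omega$, so the hypothesis on $G$ translates into the corresponding pointwise inequality $G(x,u(x)) \le 0$ (resp.\ $\ge 0$) a.e.\ in $\Omega$. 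Note also that since $p > N$ we have $p^\ast = \infty$, so each $q_i \in (p,\infty)$ already lies in $(p,p^\ast)$ and therefore Lemma \ref{Lemma 2.2} and the expansion \eqref{4.7}, $\Phi(u) = I(u) - \lambda\, J(u) + \o(\norm{u}^p)$ as $\norm{u} \to 0$, remain valid.

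For part $(1)$, with $\lambda = \lambda_1$, I would imitate case (1)(\emph{ii}) of Theorem \ref{Theorem 4.4}: combining $I(u) \ge \lambda_1 J(u)$ from \eqref{4.8} with the pointwise bound $G(x,u(x)) \le 0$ a.e.\ for $u \in B_{\rho_0}$ gives
\[
\Phi(u) \ge I(u) - \lambda_1\, J(u) - \int_\Omega G(x,u)\, dx \ge 0 \quad \forall u \in B_{\rho_0},
\]
so $0$ is a local minimizer. Since it is an isolated critical point, the definition \eqref{4.3} applied with a neighborhood $U \incl B_{\rho_0}$ forces $C^0(\Phi,0) \isom \Z_2$ and $C^q(\Phi,0) = 0$ for $q \ge 1$.

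For part $(2)$ I would take exactly the same cones $W_\pm$ as in the proof of Theorem \ref{Theorem 4.4}(2), so that \eqref{4.4} holds automatically. For case (\emph{i}) ($\lambda = \lambda_k < \lambda_{k+1}$, $G \ge 0$ near $0$), on $B_{\rho_0} \cap W_-$ I would use $I(u) \le \lambda_k J(u) = \lambda J(u)$ together with the pointwise sign $G(x,u(x)) \ge 0$ to obtain $\Phi(u) \le 0$; on $B_\rho \cap W_+$ with a possibly smaller $\rho \le \rho_0$ I would reuse the estimate from case (2)(\emph{i}) of Theorem \ref{Theorem 4.4}, which depends only on the remainder \eqref{4.7} and on $I(u) \ge \lambda_{k+1} J(u)$, to get $\Phi(u) \ge 0$. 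Case (\emph{ii}) is symmetric with the roles of $W_-$ and $W_+$ swapped. Proposition \ref{Proposition 4.2} then yields $C^k(\Phi,0) \ne 0$. There is no genuine obstacle here: the only bookkeeping point is to shrink the radius in the cohomological local splitting to $\rho \le \rho_0 = \delta/C_\infty$, so that the remainder estimate and the pointwise sign control on $G$ are both available at the same time.
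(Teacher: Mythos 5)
Your argument is correct and is exactly the route the paper takes: the paper proves Theorem \ref{Theorem 4.5} by the one-line observation that for $p > N$ the imbedding $W^{1,\, p}_0(\Omega) \hookrightarrow L^\infty(\Omega)$ lets one restrict to a ball of radius $\delta/C_\infty$ so that only the values of $G(x,t)$ for $|t| \le \delta$ enter the estimates of Theorem \ref{Theorem 4.4}. Your write-up just makes explicit the bookkeeping (shrinking $\rho$ to $\min\set{\rho,\rho_0}$) that the paper leaves implicit.
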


We close this section by showing that the conclusions of Theorem \ref{Theorem 4.5} also hold for $p \le N$ when the weights $h$ and $K_i$ belong to suitable subclasses of $\A_p$ and $\A_{q_i}$, respectively.

\begin{definition} \label{Definition 4.6}
For $p \le N$ and $q \in [1,p^\ast)$, let $\widetilde{\A}_q$ denote the class of measurable functions $K$ such that $K \rho^a \in L^r(\Omega)$ for some $a \in [0,q - 1]$ and $r \in (1,\infty)$ satisfying
\begin{equation} \label{4.9}
\frac{1}{r} + \frac{a}{p} + \frac{q-1-a}{p^\ast} < \frac{p}{N}.
\end{equation}
\end{definition}

Note that $\widetilde{\A}_q = \A_q$ when $p = N$. When $p < N$,
\[
\frac{1}{p^\ast} + \frac{p}{N} = 1 - \frac{(N-p)(p-1)}{Np} < 1
\]
and hence $\widetilde{\A}_q \subset \A_q$.

\begin{lemma} \label{Lemma 4.7}
If $p \le N$, $q \in [1,p^\ast)$, and $K \in \widetilde{\A}_q$, then there exists $s > N/p$ such that $K(x)\, |u|^{q-1} \in L^s(\Omega)$ and
\[
\pnorm[s]{K(x)\, |u|^{q-1}} \le C \norm{u}^{q-1}
\]
for all $u \in W^{1,\, p}_0(\Omega)$.
\end{lemma}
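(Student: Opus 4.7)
The plan is to adapt the H\"older-estimate trick of Lemma \ref{Lemma 2.2}, but now distribute the factor $|u|^{q-1}$ entirely onto $K$ (no test function $v$), using the stronger condition \eqref{4.9} to push the resulting integrability exponent on the left-hand side above $N/p$.

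Concretely, let $a \in [0, q-1]$ and $r \in (1,\infty)$ be the exponents supplied by $K \in \widetilde{\A}_q$, so that $K\rho^a \in L^r(\Omega)$ and \eqref{4.9} holds. Since \eqref{4.9} is a \emph{strict} inequality, by continuity I can choose $b \in (1, p^\ast)$ finite (any sufficiently large number when $p = N$, since then $p^\ast = \infty$) and close enough to $p^\ast$ that
\[
\frac{1}{r} + \frac{a}{p} + \frac{q-1-a}{b} < \frac{p}{N}.
\]
Now define $s$ by $\frac{1}{s} = \frac{1}{r} + \frac{a}{p} + \frac{q-1-a}{b}$. Then automatically $s > N/p$, and $r/s$, $p/(as)$, $b/((q-1-a)s)$ are conjugate H\"older exponents (with the convention that a factor is simply dropped when its power $a$ or $q-1-a$ vanishes, taking care of the borderline cases $a = 0$ and $a = q-1$).

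Next, apply H\"older's inequality to the factorization
\[
|K(x)|\, |u|^{q-1} = \left|K\rho^a\right| \left|\frac{u}{\rho}\right|^{a} |u|^{q-1-a}
\]
with the exponents above; this gives
\[
\pnorm[s]{K(x)\, |u|^{q-1}} \le \pnorm[r]{K\rho^a}\, \pnorm[p]{u/\rho}^{a}\, \pnorm[b]{u}^{q-1-a}.
\]
The Hardy inequality bounds $\pnorm[p]{u/\rho} \le C \norm{u}$, and the Sobolev imbedding (valid since $b < p^\ast$) bounds $\pnorm[b]{u} \le C \norm{u}$, so the right-hand side is $\le C \norm{u}^{q-1}$, as claimed.

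There is no serious obstacle here: the only thing to check carefully is that the strict inequality in \eqref{4.9} is precisely what gives us room to replace $p^\ast$ with a finite $b$ while still keeping $1/s < p/N$, and to verify that the degenerate cases ($a = 0$, $a = q-1$, or $p = N$) all reduce to omitting one of the three H\"older factors or choosing $b$ arbitrarily large.
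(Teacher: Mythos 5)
Your proof is correct and follows essentially the same route as the paper: choose $b < p^\ast$ preserving the strict inequality in \eqref{4.9}, define $s$ by $1/s = 1/r + a/p + (q-1-a)/b$ so that $s > N/p$, apply H\"older to the factorization $|K\rho^a|\,|u/\rho|^a\,|u|^{q-1-a}$, and finish with the Hardy inequality and the Sobolev imbedding. The extra attention you give to the degenerate cases $a=0$, $a=q-1$, and $p=N$ is sound but not a substantive difference.
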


\begin{proof}
Let $a$ and $r$ be as in Definition \ref{Definition 4.6}. By \eqref{4.9}, there exists $b < p^\ast$ such that
\begin{equation} \label{4.10}
\frac{1}{r} + \frac{a}{p} + \frac{q-1-a}{b} < \frac{p}{N}.
\end{equation}
By the H\"older inequality,
\[
\int_\Omega |K(x)|^s\, |u|^{(q-1)\, s}\, dx = \int_\Omega \abs{K \rho^a}^s \abs{\frac{u}{\rho}}^{as} |u|^{(q-1-a)\, s}\, dx \le \pnorm[r]{K \rho^a}^s \pnorm[p]{\frac{u}{\rho}}^{as} \pnorm[b]{u}^{(q-1-a)\, s},
\]
where $s/r + as/p + (q-1-a)\, s/b = 1$ and hence $s > N/p$ by \eqref{4.10}. Since $\pnorm[p]{u/\rho} \le C \norm{u}$ by the Hardy inequality (see Ne{\v{c}}as \cite{MR0163054}) and $\pnorm[b]{u} \le C \norm{u}$ by the Sobolev imbedding, the conclusion follows.
\end{proof}

Assume that $p \le N$, $h \in \widetilde{\A}_p$, and $K_i \in \widetilde{\A}_{q_i}$ for $i = 1,\dots,n$. First we show that the critical groups of $\Phi$ at $0$ depend only on the values of $g(x,t)$ for small $|t|$.

\begin{lemma} \label{Lemma 4.8}
Let $\delta > 0$ and let $\vartheta : \R \to [- \delta,\delta]$ be a smooth nondecreasing function such that $\vartheta(t) = - \delta$ for $t \le - \delta$, $\vartheta(t) = t$ for $- \delta/2 \le t \le \delta/2$, and $\vartheta(t) = \delta$ for $t \ge \delta$. Set
\[
\Phi_1(u) = \int_\Omega \left[\frac{1}{p}\, |\nabla u|^p - \frac{1}{p}\, \lambda\, h(x)\, |u|^p - G(x,\vartheta(u))\right] dx, \quad u \in W^{1,\, p}_0(\Omega).
\]
If $0$ is an isolated critical point of $\Phi$, then it is also an isolated critical point of $\Phi_1$ and $C_q(\Phi,0) \isom C_q(\Phi_1,0)$ for all $q$.
\end{lemma}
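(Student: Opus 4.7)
The plan is to apply Proposition~\ref{Proposition 4.1} to the linear homotopy
\[
\Phi_s(u) = \Phi(u) + s \int_\Omega \bigl[G(x,u) - G(x,\vartheta(u))\bigr] dx, \quad s \in [0,1],
\]
interpolating between $\Phi_0 = \Phi$ and $\Phi_1$. Each $\Phi_s$ is $C^1$ with $0$ as a critical point, and continuity of $s \mapsto \Phi_s$ in $C^1(U,\R)$ on any bounded $U$ is immediate, the map being affine, using \eqref{4.2} and Lemma~\ref{Lemma 2.2}. The $(\text{PS})$ condition over a closed ball $U$ follows by the argument of Lemma~\ref{Lemma 2.3} verbatim, since $g(x,\vartheta(u))\, \vartheta'(u)$ satisfies the same growth estimate as $g(x,u)$ uniformly in $s$. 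The task therefore reduces to locating $\rho > 0$ such that $0$ is the only critical point of any $\Phi_s$ in $U := \bgset{u : \norm{u} \le \rho}$; Proposition~\ref{Proposition 4.1} will then deliver $C_q(\Phi,0) \isom C_q(\Phi_1,0)$.

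The crux is an a priori bound $\pnorm[\infty]{u} \le \psi(\norm{u})$ with $\psi$ continuous and $\psi(0) = 0$, valid for any critical point $u$ of any $\Phi_s$. Such a $u$ weakly solves
\[
- \Delta_p u = \lambda\, h(x)\, |u|^{p-2}\, u + (1-s)\, g(x,u) + s\, g(x,\vartheta(u))\, \vartheta'(u) =: V(x).
\]
Since $h \in \widetilde{\A}_p$ and each $K_i \in \widetilde{\A}_{q_i}$, applying Lemma~\ref{Lemma 4.7} to every summand yields some $\sigma > N/p$, independent of $s$, with $V \in L^\sigma(\Omega)$ and
\[
\pnorm[\sigma]{V} \le C \biggl(\norm{u}^{p-1} + \sum_{i=1}^n \norm{u}^{q_i-1}\biggr);
\]
the truncation is harmless because $|\vartheta(u)| \le |u|$ and $|\vartheta'| \le 1$. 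Standard Moser/Serrin regularity for the $p$-Laplacian with right-hand side in $L^\sigma$, $\sigma > N/p$ (of Guedda--V\'eron, Pucci--Serrin type), then gives $u \in L^\infty(\Omega)$ with $\pnorm[\infty]{u}$ controlled by a power of $\pnorm[\sigma]{V}$ that vanishes as $\norm{u} \to 0$, since every $q_i > p$.

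Choose now $\rho > 0$ so small that $\norm{u} \le \rho$ forces $\pnorm[\infty]{u} \le \delta/2$; then $\vartheta(u) = u$ pointwise on $\Omega$, so the Euler equation for $\Phi_s$ collapses to that for $\Phi$ independently of $s$. Every critical point of any $\Phi_s$ in $U$ is then a critical point of $\Phi$ in $U$, and shrinking $\rho$ once more so that $0$ is the sole critical point of $\Phi$ in $\overline{B_\rho}$ (possible by hypothesis) completes the construction of $U$. The main obstacle is precisely this $L^\infty$ regularity step: one must exchange $W^{1,\, p}_0$-smallness for $L^\infty$-smallness through a nonlinear bootstrap, which is exactly the reason the subclasses $\widetilde{\A}_q$ were introduced in Definition~\ref{Definition 4.6} — to secure $\sigma > N/p$ on the integrability of the right-hand side.
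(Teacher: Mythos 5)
Your proposal is correct and follows essentially the same route as the paper: apply Proposition~\ref{Proposition 4.1} to a homotopy joining $\Phi$ to $\Phi_1$, and isolate $0$ uniformly in $s$ by showing via Lemma~\ref{Lemma 4.7} and the Guedda--V\'eron $L^\infty$ estimate that any critical point of any $\Phi_s$ that is small in $W^{1,\,p}_0(\Omega)$ is small in $L^\infty(\Omega)$, so the truncation is inactive and the equation collapses to that of $\Phi$. The only (immaterial) difference is the form of the homotopy --- you interpolate the functionals linearly, $(1-s)\,G(x,u)+s\,G(x,\vartheta(u))$, whereas the paper interpolates inside the argument, $G(x,(1-s)\,u+s\,\vartheta(u))$; both yield the same uniform growth bound on the right-hand side of the Euler equation.
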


\begin{proof}
We apply Proposition \ref{Proposition 4.1} to the family of functionals
\[
\Phi_s(u) = \int_\Omega \left[\frac{1}{p}\, |\nabla u|^p - \frac{1}{p}\, \lambda\, h(x)\, |u|^p - G(x,(1 - s)\, u + s\, \vartheta(u))\right] dx, \quad u \in W^{1,\, p}_0(\Omega),\, s \in [0,1]
\]
in a small ball $B_\varepsilon(0) = \bgset{u \in W^{1,\, p}_0(\Omega) : \norm{u} \le \varepsilon}$, noting that $\Phi_0 = \Phi$. Lemma \ref{Lemma 2.3} implies that each $\Phi_s$ satisfies the \PS{} condition over $B_\varepsilon(0)$, and it is easy to see that the map $[0,1] \to C^1(B_\varepsilon(0),\R),\, s \mapsto \Phi_s$ is continuous, so it only remains to show that for sufficiently small $\varepsilon > 0$, $B_\varepsilon(0)$ contains no critical point of any $\Phi_s$ other than $0$.

Suppose $u_j \to 0$ in $W^{1,\, p}_0(\Omega)$, $\Phi_{s_j}'(u_j) = 0,\, s_j \in [0,1]$, and $u_j \ne 0$. Then
\[
\left\{\begin{aligned}
- \Delta_p\, u_j & = \lambda\, h(x)\, |u_j|^{p-2}\, u_j + g_j(x,u_j) && \text{in } \Omega\\[10pt]
u_j & = 0 && \text{on } \bdry{\Omega},
\end{aligned}\right.
\]
where
\[
g_j(x,t) = (1 - s_j + s_j\, \vartheta'(t))\, g(x,(1 - s_j)\, t + s_j\, \vartheta(t)).
\]
Since $(1 - s_j)\, t + s_j\, \vartheta(t) = t$ for $|t| \le \delta/2$ and $|(1 - s_j)\, t + s_j\, \vartheta(t)| \le |t| + \delta < 3\, |t|$ for $|t| > \delta/2$, \eqref{4.2} implies
\begin{equation} \label{4.11}
|g_j(x,t)| \le C \sum_{i=1}^n K_i(x)\, |t|^{q_i - 1} \quad \text{for a.a. $x \in \Omega$ and all $t \in \R$},
\end{equation}
where $C$ denotes a generic positive constant independent of $j$. By Lemma \ref{Lemma 4.7}, there exists $s > N/p$ such that $h(x)\, |u_j|^{p-1} \in L^s(\Omega)$ and
\begin{equation}
\pnorm[s]{h(x)\, |u_j|^{p-1}} \le C \norm{u_j}^{p-1},
\end{equation}
and there exists $s_i > N/p$ such that $K_i(x)\, |u_j|^{q_i - 1} \in L^{s_i}(\Omega)$ and
\begin{equation} \label{4.13}
\pnorm[s_i]{K_i(x)\, |u_j|^{q_i - 1}} \le C \norm{u_j}^{q_i - 1}
\end{equation}
for $i = 1,\dots,n$. Let $s_0 = \min\, \bgset{s,s_1,\dots,s_n} > N/p$. By \eqref{4.11}--\eqref{4.13}, $\lambda\, h(x)\, |u_j|^{p-2}\, u_j + g_j(x,u_j) \in L^{s_0}(\Omega)$ and
\begin{multline*}
\pnorm[s_0]{\lambda\, h(x)\, |u_j|^{p-2}\, u_j + g_j(x,u_j)} \le C \left(\pnorm[s_0]{h(x)\, |u_j|^{p-1}} + \sum_{i=1}^n \pnorm[s_0]{K_i(x)\, |u_j|^{q_i - 1}}\right)\\[10pt]
\le C \left(\pnorm[s]{h(x)\, |u_j|^{p-1}} + \sum_{i=1}^n \pnorm[s_i]{K_i(x)\, |u_j|^{q_i - 1}}\right) \le C \left(\norm{u_j}^{p-1} + \sum_{i=1}^n \norm{u_j}^{q_i - 1}\right) \to 0,
\end{multline*}
and hence $u_j \in L^\infty(\Omega)$ and $u_j \to 0$ in $L^\infty(\Omega)$ by Guedda and V{\'e}ron \cite[Proposition 1.3]{MR1009077}. So for sufficiently large $j$, $|u_j| \le \delta/2$ a.e.\! and hence $\Phi'(u_j) = \Phi_{s_j}'(u_j) = 0$, contradicting our assumption that $0$ is an isolated critical point of $\Phi$.
\end{proof}

The following theorem is now immediate from Lemma \ref{Lemma 4.8} and Theorem \ref{Theorem 4.4}.

\begin{theorem} \label{Theorem 4.9}
Assume that $p \le N$, $h \in \widetilde{\A}_p$ is positive on a set of positive measure, $g$ satisfies \eqref{4.2} for some $q_i \in (p,p^\ast)$ and $K_i \in \widetilde{\A}_{q_i}$ for $i = 1,\dots,n$, and $0$ is an isolated critical point of $\Phi$.
\begin{enumarab}
\item $C^0(\Phi,0) \isom \Z_2$ and $C^q(\Phi,0) = 0$ for $q \ge 1$ if $\lambda = \lambda_1$ and, for some $\delta > 0$, $G(x,t) \le 0$ for a.a. $x \in \Omega$ and $|t| \le \delta$.
\item $C^k(\Phi,0) \ne 0$ in the following cases:
\begin{enumroman}
\item $\lambda = \lambda_k < \lambda_{k+1}$ and, for some $\delta > 0$, $G(x,t) \ge 0$ for a.a. $x \in \Omega$ and $|t| \le \delta$;
\item $\lambda_k < \lambda_{k+1} = \lambda$ and, for some $\delta > 0$, $G(x,t) \le 0$ for a.a. $x \in \Omega$ and $|t| \le \delta$.
\end{enumroman}
\end{enumarab}
\end{theorem}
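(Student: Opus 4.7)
The plan is to reduce Theorem \ref{Theorem 4.9} to Theorem \ref{Theorem 4.4} via Lemma \ref{Lemma 4.8}, which exchanges the original functional $\Phi$ for a truncated version $\Phi_1$ whose nonlinearity depends only on values of $g$ on $[-\delta,\delta]$. Since $\Phi$ and $\Phi_1$ have isomorphic critical groups at $0$, and the local sign condition on $G$ becomes a global sign condition on the truncated potential, Theorem \ref{Theorem 4.4} then yields the claim directly.

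More concretely, first I would fix $\delta > 0$ as in the hypothesis and choose $\vartheta$ as in Lemma \ref{Lemma 4.8}. Writing $\widetilde{g}(x,t) := \vartheta'(t)\, g(x,\vartheta(t))$ and $\widetilde{G}(x,t) := G(x,\vartheta(t))$, the truncated functional
\[
\Phi_1(u) = \int_\Omega \left[\frac{1}{p}\, |\nabla u|^p - \frac{1}{p}\, \lambda\, h(x)\, |u|^p - \widetilde{G}(x,u)\right] dx
\]
satisfies $C^q(\Phi, 0) \isom C^q(\Phi_1, 0)$ for all $q$ by Lemma \ref{Lemma 4.8}, whose hypotheses $h \in \widetilde{\A}_p$ and $K_i \in \widetilde{\A}_{q_i}$ are precisely those assumed here. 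So it suffices to apply Theorem \ref{Theorem 4.4} to $\Phi_1$.

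For that I would verify two things. First, $\widetilde{g}$ satisfies the growth condition \eqref{4.2} with the same exponents $q_i$ and weights $K_i$, up to a universal constant: on $|t| \le \delta/2$ one has $\vartheta(t) = t$, on $\delta/2 < |t| \le \delta$ one has $|\vartheta(t)| \le \delta \le 2|t|$, and on $|t| > \delta$ one has $\vartheta'(t) = 0$, so the bound \eqref{4.2} transfers with an extra factor of at most $2^{q_i - 1}$. Since $\widetilde{\A}_q \subset \A_q$ when $p \le N$, all the weight assumptions needed by Theorem \ref{Theorem 4.4} are met. Second, because $|\vartheta(t)| \le \delta$ for every $t \in \R$, any local sign condition $G(x,s) \ge 0$ (resp.\ $\le 0$) valid on $\set{|s| \le \delta}$ translates into the global condition $\widetilde{G}(x,t) \ge 0$ (resp.\ $\le 0$) for all $t \in \R$ and a.a.\ $x \in \Omega$.

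With these observations, the three cases of Theorem \ref{Theorem 4.9} correspond respectively to cases $(1)(ii)$, $(2)(ii)$, and $(2)(iii)$ of Theorem \ref{Theorem 4.4} applied to $\Phi_1$, and the conclusions pass back to $\Phi$ through the isomorphism supplied by Lemma \ref{Lemma 4.8}. The only actual work is the routine growth-bound check for $\widetilde{g}$; there is no further analytical obstacle, since the \PS{} condition and the $L^\infty$-regularity argument needed for the critical-group invariance have already been carried out in Lemma \ref{Lemma 4.8}.
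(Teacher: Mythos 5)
Your proposal is correct and follows exactly the paper's route: the paper states that Theorem \ref{Theorem 4.9} is immediate from Lemma \ref{Lemma 4.8} (truncation preserving the critical groups) combined with Theorem \ref{Theorem 4.4}, which is precisely your reduction. The only extra content you supply is the routine verification that the truncated nonlinearity still satisfies \eqref{4.2} (up to a constant absorbable into the $K_i$) and that the local sign condition on $G$ becomes global for $G(x,\vartheta(t))$; both checks are correct.
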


\section{Nontrivial solutions} \label{Section 5}

In this section we obtain a nontrivial solution of the problem
\begin{equation} \label{5.1}
\left\{\begin{aligned}
- \Delta_p\, u & = \lambda\, h(x)\, |u|^{p-2}\, u + K(x)\, |u|^{q-2}\, u + g(x,u) && \text{in } \Omega\\[10pt]
u & = 0 && \text{on } \bdry{\Omega},
\end{aligned}\right.
\end{equation}
where $q \in (p,p^\ast)$, $K \in \A_q$ satisfies
\begin{equation} \label{5.2}
\essinf_{x \in \Omega}\, K(x) > 0,
\end{equation}
and $g$ satisfies \eqref{4.2} with each $q_i \in (p,q)$. We will assume that the weights $h$ and $K_i$ belong to suitable subclasses of $\A_p$ and $\A_{q_i}$, respectively.

\begin{definition} \label{Definition 5.1}
For $q \in (1,p^\ast)$ and $s \in [1,q)$, let $\A_s^q$ denote the class of measurable functions $K$ such that $K \rho^a \in L^r(\Omega)$ for some $a \in [0,s - 1]$ and $r \in (1,\infty)$ satisfying
\begin{equation} \label{5.3}
\frac{1}{r} + \frac{a}{p} + \frac{s-a}{q} \le 1.
\end{equation}
\end{definition}

Clearly, $\A_s^q \subset \A_s$.

\begin{lemma} \label{Lemma 5.2}
If $q \in [1,p^\ast)$, $s \in [1,q)$, and $K \in \A_s^q$, then there exist $t < p$ and, for every $\varepsilon > 0$, a constant $C(\varepsilon)$ such that
\[
\int_\Omega |K(x)|\, |u|^s\, dx \le C(\varepsilon) \norm{u}^t + \varepsilon \pnorm[q]{u}^q \quad \forall u \in W^{1,\, p}_0(\Omega).
\]
\end{lemma}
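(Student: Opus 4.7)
The strategy is to follow the template of Lemma~\ref{Lemma 2.2} to get a multiplicative estimate, and then split it with Young's inequality so that the factor $\pnorm[q]{u}$ appears only to the critical power $q$ with an arbitrarily small coefficient. Let $a \in [0, s-1]$ and $r \in (1,\infty)$ be as in Definition~\ref{Definition 5.1}; after replacing $r$ by a smaller exponent if necessary so that the H\"older exponents sum to~$1$ (which is harmless since $\Omega$ is bounded, giving $L^r(\Omega) \hookrightarrow L^{r'}(\Omega)$), I would apply the three-factor H\"older inequality to
\[
|K(x)|\, |u|^s = \abs{K \rho^a}\cdot\abs{\frac{u}{\rho}}^a\cdot |u|^{s-a}
\]
with exponents $r$, $p/a$, $q/(s-a)$. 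Combined with the Hardy inequality $\pnorm[p]{u/\rho} \le C \norm{u}$, this produces
\[
\int_\Omega |K(x)|\, |u|^s\, dx \le C\, \norm{u}^a\, \pnorm[q]{u}^{s-a}.
\]

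Next, since $0 \le a \le s-1 < s < q$, the numbers $q/(s-a)$ and $q/(q-s+a)$ are conjugate and both strictly greater than~$1$, so Young's inequality yields, for every $\varepsilon > 0$,
\[
C\, \norm{u}^a\, \pnorm[q]{u}^{s-a} \le \varepsilon\, \pnorm[q]{u}^q + C(\varepsilon)\, \norm{u}^t, \qquad t := \frac{aq}{q - s + a},
\]
which is the estimate claimed; the degenerate case $a = 0$ is included by the same formula with $t = 0$ and a pure constant as the additive term.

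The technical heart of the argument is to verify that the exponent $t$ satisfies $t < p$, and this is precisely where the requirement $r > 1$ in Definition~\ref{Definition 5.1} does its work: from \eqref{5.3} and $1/r > 0$ one obtains the \emph{strict} inequality $a/p + (s-a)/q < 1$; multiplying through by $pq$ and rearranging yields $a(q-p) < p(q-s)$, i.e.\ $aq < p(q-s+a)$, which is exactly $t < p$. (When $q \le p$ this step is automatic, since then $a(q-p) \le 0 < p(q-s)$.) I expect this upgrade from the weak $\le 1$ in \eqref{5.3} to a \emph{strict} exponent inequality $t < p$ to be the subtle point most worth spelling out carefully; once it is in place, the lemma is immediate.
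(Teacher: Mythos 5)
Your proposal is correct and follows essentially the same route as the paper's proof: the same three-factor splitting $|K\rho^a|\,|u/\rho|^a\,|u|^{s-a}$, H\"older plus the Hardy inequality to reach $C\,\norm{u}^a \pnorm[q]{u}^{s-a}$, and Young's inequality with the strictness coming from $1/r>0$ in \eqref{5.3} to get $t<p$. The only cosmetic difference is that you shrink $r$ to make the H\"older exponents conjugate, whereas the paper instead introduces $b\le q$ on the last factor and uses $\pnorm[b]{u}\le C\pnorm[q]{u}$; both rely on the boundedness of $\Omega$ in the same way.
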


\begin{proof}
Let $a$ and $r$ be as in Definition \ref{Definition 5.1}. By the H\"older inequality,
\[
\int_\Omega |K(x)|\, |u|^s\, dx = \int_\Omega \abs{K \rho^a} \abs{\frac{u}{\rho}}^a |u|^{s-a}\, dx \le \pnorm[r]{K \rho^a} \pnorm[p]{\frac{u}{\rho}}^a \pnorm[b]{u}^{s-a},
\]
where $1/r + a/p + (s-a)/b = 1$ and hence $b \le q$ by \eqref{5.3}. Since $\pnorm[p]{u/\rho} \le C \norm{u}$ by the Hardy inequality (see Ne{\v{c}}as \cite{MR0163054}) and $\pnorm[b]{u} \le C \pnorm[q]{u}$, the last expression is less than or equal to $C \norm{u}^{a} \pnorm[q]{u}^{s - a}$. By the Young inequality, the latter is less than or equal to $C(\varepsilon) \norm{u}^{t} + \varepsilon \pnorm[q]{u}^q$, where $a/t + (s-a)/q = 1$ and hence $t < p$ by \eqref{5.3}.
\end{proof}

We assume that $h \in \A_p^q$ and $K_i \in \A_{q_i}^q$ for $i = 1,\dots,n$. First we verify that the associated functional
\[
\Phi(u) = \int_\Omega \left[\frac{1}{p}\, |\nabla u|^p - \frac{1}{p}\, \lambda\, h(x)\, |u|^p - \frac{1}{q}\, K(x)\, |u|^q - G(x,u)\right] dx, \quad u \in W^{1,\, p}_0(\Omega),
\]
where $G(x,t) = \int_0^t g(x,\tau)\, d\tau$, satisfies the \PS{} condition.

\begin{lemma} \label{Lemma 5.3}
Every sequence $\seq{u_j} \subset W^{1,\, p}_0(\Omega)$ such that $\seq{\Phi(u_j)}$ is bounded and $\Phi'(u_j) \to 0$ has a convergent subsequence.
\end{lemma}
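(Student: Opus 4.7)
The plan is to prove boundedness of $\seq{u_j}$ in $W^{1,\, p}_0(\Omega)$ and then conclude via Lemma \ref{Lemma 2.3}. Observe first that the $\Phi$ of Section \ref{Section 5} fits the template \eqref{2.2} with $f(x,u) = \lambda\, h(x)\, |u|^{p-2}\, u + K(x)\, |u|^{q-2}\, u + g(x,u)$, whose growth is dominated by the weights $|\lambda\, h|,\, K,\, K_1,\dots,K_n$; since $\A_s^q \subset \A_s$, these all belong to the respective $\A$-classes and satisfy \eqref{1.2}, so Lemma \ref{Lemma 2.3} is directly applicable once boundedness is in hand.

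For boundedness I will use the standard combination trick, but with an intermediate parameter $\theta \in (1/q,\, 1/p)$ chosen so that both coercive terms survive with positive coefficients. A direct computation yields
\[
\Phi(u_j) - \theta\, \dualp{\Phi'(u_j)}{u_j} = \left(\frac{1}{p} - \theta\right) \norm{u_j}^p - \lambda\left(\frac{1}{p} - \theta\right) \int_\Omega h(x)\, |u_j|^p\, dx + \left(\theta - \frac{1}{q}\right) \int_\Omega K(x)\, |u_j|^q\, dx + \int_\Omega \left[\theta\, g(x,u_j)\, u_j - G(x,u_j)\right] dx,
\]
and the left-hand side is bounded above by $M + \o(1)\, \norm{u_j}$, where $M$ is a bound for $|\Phi(u_j)|$. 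The decisive point is that \eqref{5.2} gives $\int_\Omega K(x)\, |u_j|^q\, dx \ge \kappa\, \pnorm[q]{u_j}^q$ with $\kappa := \essinf_\Omega K > 0$, so the choice of $\theta$ produces simultaneous coercivity in $\norm{u_j}^p$ and $\pnorm[q]{u_j}^q$.

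Since $h \in \A_p^q$ and each $K_i \in \A_{q_i}^q$ with $q_i < q$, Lemma \ref{Lemma 5.2} (together with \eqref{4.2}, which yields $|\theta\, g(x,u)\, u - G(x,u)| \le C \sum_i K_i(x)\, |u|^{q_i}$) gives, for any $\varepsilon > 0$,
\[
\int_\Omega |h(x)|\, |u_j|^p\, dx + \sum_{i=1}^n \int_\Omega K_i(x)\, |u_j|^{q_i}\, dx \le C(\varepsilon) \sum_{\alpha = 0}^n \norm{u_j}^{t_\alpha} + \varepsilon\, \pnorm[q]{u_j}^q,
\]
with every $t_\alpha < p$. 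Choosing $\varepsilon$ small enough to absorb the $\varepsilon\, \pnorm[q]{u_j}^q$ term into $(\theta - 1/q)\, \kappa\, \pnorm[q]{u_j}^q$ leaves
\[
\left(\frac{1}{p} - \theta\right) \norm{u_j}^p \le M + \o(1)\, \norm{u_j} + C \sum_{\alpha = 0}^n \norm{u_j}^{t_\alpha},
\]
with all exponents on the right strictly less than $p$; since $p > 1$, $\norm{u_j}$ must be bounded, and Lemma \ref{Lemma 2.3} delivers a convergent subsequence.

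The subtle point is the noncoercive residual $\varepsilon\, \pnorm[q]{u_j}^q$ left over by Lemma \ref{Lemma 5.2}: because $q > p$, the Sobolev inequality cannot dominate it by $\norm{u_j}^p$, so a naive argument with only $\Phi - \frac{1}{p}\, \dualp{\Phi'}{u}$ or $\Phi - \frac{1}{q}\, \dualp{\Phi'}{u}$ fails. The hypothesis \eqref{5.2} is precisely what converts the $p$-superlinear term $\int K(x)\, |u|^q\, dx$ into genuine $L^q$-coercivity, and the intermediate value $\theta \in (1/q,\, 1/p)$ is forced by the need to retain both the $\norm{\cdot}^p$- and $\pnorm[q]{\cdot}^q$-coercive contributions with the correct sign.
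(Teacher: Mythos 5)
Your proof is correct and follows essentially the same route as the paper: both reduce the claim to boundedness of $\seq{u_j}$ via Lemma \ref{Lemma 2.3}, and both exploit \eqref{5.2} together with Lemma \ref{Lemma 5.2} to absorb the weighted subcritical terms into the coercive $\pnorm[q]{u_j}^q$ and $\norm{u_j}^p$ contributions. The only difference is organizational: the paper chains the two combinations $p\,\Phi(u_j) - \dualp{\Phi'(u_j)}{u_j}$ and $q\,\Phi(u_j) - \dualp{\Phi'(u_j)}{u_j}$ in succession, whereas you use a single combination with an intermediate $\theta \in (1/q,\,1/p)$, which is just a convex combination of the same two identities.
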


\begin{proof}
It suffices to show that $\seq{u_j}$ is bounded by Lemma \ref{Lemma 2.3}. We have
\begin{equation} \label{5.4}
\left(1 - \frac{p}{q}\right) \int_\Omega K(x)\, |u_j|^q\, dx = p\, \Phi(u_j) - \dualp{\Phi'(u_j)}{u_j} + \int_\Omega \left[p\, G(x,u_j) - u_j\, g(x,u_j)\right] dx.
\end{equation}
By \eqref{4.2} and Lemma \ref{Lemma 5.2}, for every $\varepsilon > 0$, there exists $C(\varepsilon)$ such that
\begin{equation} \label{5.5}
\abs{\int_\Omega \left[p\, G(x,u_j) - u_j\, g(x,u_j)\right] dx} \le \sum_{i=1}^n \left(1 + \frac{p}{q_i}\right) \int_\Omega K_i(x)\, |u_j|^{q_i}\, dx \le C(\varepsilon) \sum_{i=1}^n \norm{u_j}^{t_i} + \varepsilon \pnorm[q]{u_j}^q,
\end{equation}
where each $t_i < p$. Combining \eqref{5.2}, \eqref{5.4}, and \eqref{5.5} gives
\begin{equation} \label{5.6}
\pnorm[q]{u_j}^q \le C \left(\sum_{i=1}^n \norm{u_j}^{t_i} + 1\right) + \o(\norm{u_j}).
\end{equation}

Now we use
\begin{equation}
\left(\frac{q}{p} - 1\right) \left(\norm{u_j}^p - \lambda \int_\Omega h(x)\, |u_j|^p\, dx\right) = q\, \Phi(u_j) - \dualp{\Phi'(u_j)}{u_j} + \int_\Omega \left[q\, G(x,u_j) - u_j\, g(x,u_j)\right] dx.
\end{equation}
By Lemma \ref{Lemma 5.2},
\begin{equation}
\abs{\int_\Omega h(x)\, |u_j|^p\, dx} \le C \left(\norm{u_j}^t + \pnorm[q]{u_j}^q\right),
\end{equation}
where $t < p$. As in \eqref{5.5},
\begin{equation} \label{5.9}
\abs{\int_\Omega \left[q\, G(x,u_j) - u_j\, g(x,u_j)\right] dx} \le C \left(\sum_{i=1}^n \norm{u_j}^{t_i} + \pnorm[q]{u_j}^q\right).
\end{equation}
Combining \eqref{5.6}--\eqref{5.9} gives
\[
\norm{u_j}^p \le C \left(\sum_{i=1}^n \norm{u_j}^{t_i} + \norm{u_j}^t + 1\right) + \o(\norm{u_j}),
\]
which implies that $\seq{u_j}$ is bounded since each $t_i < p$ and $t < p$.
\end{proof}

Next we study the structure of the sublevel sets $\Phi^\alpha = \bgset{u \in W^{1,\, p}_0(\Omega) : \Phi(u) \le \alpha}$ for $\alpha < 0$ with $|\alpha|$ large.

\begin{lemma} \label{Lemma 5.4}
We have
\begin{enumarab}
\item $\displaystyle \sup_{u \in W^{1,\, p}_0(\Omega)} \left(\dualp{\Phi'(u)}{u} - \frac{p+q}{2}\, \Phi(u)\right) < + \infty$;
\item $\displaystyle \lim_{t \to + \infty} \Phi(tu) = - \infty \quad \forall u \in W^{1,\, p}_0(\Omega) \setminus \set{0}$.
\end{enumarab}
\end{lemma}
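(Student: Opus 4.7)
For part (1), the plan is to form the specific linear combination $\dualp{\Phi'(u)}{u} - \tfrac{p+q}{2}\Phi(u)$, chosen precisely so that the coefficients of $\norm{u}^p$ and of $\int_\Omega K(x)|u|^q\,dx$ both come out strictly negative, namely $-\tfrac{q-p}{2p}$ and $-\tfrac{q-p}{2q}$ respectively. A direct expansion leaves, besides these two negative terms, only the cross-term $\tfrac{\lambda(q-p)}{2p}\int_\Omega h(x)|u|^p\,dx$ and the nonlinear remainder $\int_\Omega [\tfrac{p+q}{2}G(x,u) - u\,g(x,u)]\,dx$. By \eqref{5.2}, the $K$-term dominates a genuine negative multiple of $\pnorm[q]{u}^q$, namely $-c_0\tfrac{q-p}{2q}\pnorm[q]{u}^q$ with $c_0 = \essinf_\Omega K > 0$; this is the positive-measure contribution that lets the estimate close.

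I would then invoke Lemma \ref{Lemma 5.2} twice: applied to $\int |h||u|^p\,dx$ with $s=p$ and $|h|$ in place of $K$, it yields a bound $C(\varepsilon)\norm{u}^{t} + \varepsilon \pnorm[q]{u}^q$ for some $t<p$; applied to each $\int K_i|u|^{q_i}\,dx$ arising from \eqref{4.2} with $s=q_i$, it yields $C(\varepsilon)\norm{u}^{t_i} + \varepsilon\pnorm[q]{u}^q$ with $t_i<p$. Choosing $\varepsilon$ small enough absorbs all $\pnorm[q]{u}^q$ contributions into the negative $-c_0\tfrac{q-p}{2q}\pnorm[q]{u}^q$ term, so the combination is bounded above by $-\tfrac{q-p}{2p}\norm{u}^p + C\sum_i \norm{u}^{t_i} + C\norm{u}^t + C$, which is bounded above on $W^{1,p}_0(\Omega)$ since every exponent in the positive terms is strictly less than $p$.

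For part (2), I fix $u\ne 0$ and expand $\Phi(tu)$ as a function of $t>0$. The gradient and $h$ terms together contribute $O(t^p)$ (the integral $\int_\Omega h|u|^p\,dx$ is finite by Lemma \ref{Lemma 5.2}); the $K$-term contributes exactly $-\tfrac{t^q}{q}\int_\Omega K|u|^q\,dx = -c(u)\,t^q$ with $c(u)>0$ by \eqref{5.2} and $u\ne 0$; and by \eqref{4.2} the $G$-term is dominated by $\sum_i \tfrac{t^{q_i}}{q_i}\int_\Omega K_i|u|^{q_i}\,dx$, each integral finite by Lemma \ref{Lemma 2.2}. Since $p<q$ and each $q_i<q$, the $-t^q$ term dominates and $\Phi(tu)\to -\infty$. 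The only real obstacle is the careful sign bookkeeping in (1); once the strictly negative coefficient $-\tfrac{(q-p)c_0}{2q}$ of $\pnorm[q]{u}^q$ is isolated via \eqref{5.2}, the rest is just a matter of choosing $\varepsilon$ small enough to swallow the $\pnorm[q]{u}^q$ errors produced by finitely many applications of Lemma \ref{Lemma 5.2}.
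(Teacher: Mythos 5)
Your proposal is correct and follows essentially the same route as the paper: the same algebraic identity isolating the negative coefficients $-\tfrac{q-p}{2p}$ of $\norm{u}^p$ and $-\tfrac{q-p}{2q}$ of $\int_\Omega K(x)|u|^q\,dx$, the same use of \eqref{5.2} to convert the $K$-term into a negative multiple of $\pnorm[q]{u}^q$, and the same applications of Lemma \ref{Lemma 5.2} to the $h$-term and the $K_i$-terms with $\varepsilon$ chosen small enough to absorb the $\pnorm[q]{u}^q$ errors. Part (2) is likewise the paper's (one-line) argument, just written out.
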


\begin{proof}
(1) We have
\begin{multline} \label{5.10}
\dualp{\Phi'(u)}{u} - \frac{p+q}{2}\, \Phi(u) = - \frac{q-p}{2} \int_\Omega \left[\frac{1}{p}\, |\nabla u|^p - \frac{1}{p}\, \lambda\, h(x)\, |u|^p + \frac{1}{q}\, K(x)\, |u|^q\right] dx\\[10pt]
+ \int_\Omega \left[\frac{p+q}{2}\, G(x,u) - u\, g(x,u)\right] dx.
\end{multline}
By \eqref{4.2} and Lemma \ref{Lemma 5.2}, for every $\varepsilon > 0$, there exists $C(\varepsilon)$ such that
\begin{gather}
\abs{\int_\Omega \left[\frac{p+q}{2}\, G(x,u) - u\, g(x,u)\right] dx} \le C(\varepsilon) \sum_{i=1}^n \norm{u}^{t_i} + \varepsilon \pnorm[q]{u}^q,\\[10pt]
\label{5.12} \abs{\int_\Omega h(x)\, |u|^p\, dx} \le C(\varepsilon) \norm{u}^t + \varepsilon \pnorm[q]{u}^q,
\end{gather}
where each $t_i < p$ and $t < p$. Combining \eqref{5.2} and \eqref{5.10}--\eqref{5.12} gives
\[
\dualp{\Phi'(u)}{u} - \frac{p+q}{2}\, \Phi(u) \le - \frac{1}{2} \left(\frac{q}{p} - 1\right) \norm{u}^p + C \left(\sum_{i=1}^n \norm{u}^{t_i} + \norm{u}^t\right),
\]
from which the conclusion follows.

(2) This follows from \eqref{5.2} and \eqref{4.2} since $p < q$ and each $q_i < q$.
\end{proof}

\begin{lemma} \label{Lemma 5.5}
There exists $\alpha < 0$ such that $\Phi^\alpha$ is contractible in itself.
\end{lemma}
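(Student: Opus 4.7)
The plan is to exploit Lemma \ref{Lemma 5.4}(1) to force a transverse, radially unique crossing of the level $\alpha$ along every ray through the origin, and then to identify $\Phi^\alpha$ with $S \times [0,\infty)$, where $S$ is the unit sphere of $W^{1,\,p}_0(\Omega)$; since $S$ is contractible in an infinite-dimensional Banach space, so is $\Phi^\alpha$.

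First I would set $M := \sup_{u \in W^{1,\,p}_0(\Omega)} \bigl(\dualp{\Phi'(u)}{u} - \frac{p+q}{2}\, \Phi(u)\bigr)$, finite by Lemma \ref{Lemma 5.4}(1), and choose $\alpha < 0$ with $\frac{p+q}{2}\, \alpha + M < 0$. Then $\Phi(u) \le \alpha$ forces $\dualp{\Phi'(u)}{u} < 0$; in particular, along each ray $t \mapsto tv$ the derivative $\frac{d}{dt}\Phi(tv) = t^{-1}\dualp{\Phi'(tv)}{tv}$ is strictly negative at every $t > 0$ where $\Phi(tv) = \alpha$.

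Next, for each $v \in S := \set{v \in W^{1,\,p}_0(\Omega) : \norm{v} = 1}$, the scalar function $\varphi_v(t) := \Phi(tv)$ satisfies $\varphi_v(0) = 0 > \alpha$ and $\varphi_v(t) \to -\infty$ as $t \to +\infty$ by Lemma \ref{Lemma 5.4}(2), so a crossing $T(v) > 0$ with $\varphi_v(T(v)) = \alpha$ exists. Strict negativity of $\varphi_v'$ at every crossing rules out a second one (which would have to satisfy $\varphi_v' \ge 0$), so $T(v)$ is the unique crossing, with $\varphi_v(t) > \alpha$ for $t \in (0,T(v))$ and $\varphi_v(t) < \alpha$ for $t > T(v)$; the implicit function theorem makes $T$ continuous on $S$. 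Consequently $(v,r) \mapsto (T(v)+r)\, v$ is a homeomorphism from $S \times [0,\infty)$ onto $\Phi^\alpha$, with continuous inverse $u \mapsto \bigl(u/\norm{u},\, \norm{u} - T(u/\norm{u})\bigr)$, which is well-defined since $0 \notin \Phi^\alpha$ (as $\Phi(0) = 0 > \alpha$) and $u \in \Phi^\alpha$ forces $\norm{u} \ge T(u/\norm{u})$.

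Finally, $[0,\infty)$ is contractible, so $\Phi^\alpha$ is homotopy equivalent to $S$; and $S$, being the unit sphere of the infinite-dimensional Banach space $W^{1,\,p}_0(\Omega)$, is contractible by a classical theorem (e.g., Bessaga). Hence $\Phi^\alpha$ is contractible in itself. The crux is the uniqueness of the radial crossing of $\alpha$; the pointwise negativity of $\dualp{\Phi'(u)}{u}$ on $\set{\Phi = \alpha}$ supplied by Lemma \ref{Lemma 5.4}(1) is exactly what forces this uniqueness, and without it the product structure $\Phi^\alpha \isom S \times [0,\infty)$ would be in doubt.
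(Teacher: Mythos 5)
Your proof is correct and follows essentially the same route as the paper: both choose $\alpha$ via Lemma \ref{Lemma 5.4}(1) so that $\dualp{\Phi'(u)}{u}<0$ on $\Phi^\alpha$, use Lemma \ref{Lemma 5.4}(2) to get a continuous radial crossing time, and reduce to the contractibility of the punctured infinite-dimensional space (the paper retracts $W^{1,\,p}_0(\Omega)\setminus\set{0}$ onto $\Phi^\alpha$, while you package the same radial structure as a homeomorphism with $S\times[0,\infty)$). The only cosmetic quibble is that the implicit function theorem is not needed for the continuity of $T$; it follows directly from the uniqueness of the crossing and the sign pattern of $\varphi_v-\alpha$ on either side of $T(v)$.
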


\begin{proof}
By Lemma \ref{Lemma 5.4} (1), there exists $\alpha < 0$ such that
\begin{equation} \label{5.13}
\dualp{\Phi'(u)}{u} < 0 \quad \forall u \in \Phi^\alpha.
\end{equation}
For $u \in W^{1,\, p}_0(\Omega) \setminus \set{0}$, taking into account Lemma \ref{Lemma 5.4} (2), set
\[
t(u) = \min\, \bgset{t \ge 1 : \Phi(tu) \le \alpha},
\]
and note that the function $u \mapsto t(u)$ is continuous by \eqref{5.13}. Then $u \mapsto t(u)\, u$ is a retraction of $W^{1,\, p}_0(\Omega) \setminus \set{0}$ onto $\Phi^\alpha$, and the conclusion follows since $W^{1,\, p}_0(\Omega) \setminus \set{0}$ is contractible in itself.
\end{proof}

We are now ready to prove our main existence result. Let $\lambda_k \nearrow + \infty$ be the sequence of positive eigenvalues of problem \eqref{4.6} considered in the last section.

\begin{theorem} \label{Theorem 5.6}
Assume that $\lambda \ge 0$, $q \in (p,p^\ast)$, $K \in \A_q$ satisfies \eqref{5.2}, $h \in \A_p^q$ is positive on a set of positive measure, and $g$ satisfies \eqref{4.2} for some $q_i \in (p,q)$ and $K_i \in \A_{q_i}^q$ for $i = 1,\dots,n$. Then problem \eqref{5.1} has a nontrivial weak solution in each of the following cases:
\begin{enumarab}
\item $\lambda \notin \bgset{\lambda_k : k \ge 1}$;
\item $G(x,t) \ge 0$ for a.a. $x \in \Omega$ and all $t \in \R$;
\item $G(x,t) \le 0$ for a.a. $x \in \Omega$ and all $t \in \R$;
\item $p > N$ and, for some $\delta > 0$, $G(x,t) \ge 0$ for a.a. $x \in \Omega$ and $|t| \le \delta$;
\item $p > N$ and, for some $\delta > 0$, $G(x,t) \le 0$ for a.a. $x \in \Omega$ and $|t| \le \delta$;
\item $p \le N$, $h \in \widetilde{\A}_p$, $K_i \in \widetilde{\A}_{q_i}$ for $i = 1,\dots,n$, and, for some $\delta > 0$, $G(x,t) \ge 0$ for a.a. $x \in \Omega$ and $|t| \le \delta$;
\item $p \le N$, $h \in \widetilde{\A}_p$, $K_i \in \widetilde{\A}_{q_i}$ for $i = 1,\dots,n$, and, for some $\delta > 0$, $G(x,t) \le 0$ for a.a. $x \in \Omega$ and $|t| \le \delta$.
\end{enumarab}
\end{theorem}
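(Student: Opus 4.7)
The plan is a Morse-theoretic argument by contradiction: assuming that $u=0$ is the only critical point of $\Phi$, I compare the critical groups at $0$ with the critical groups at infinity, and derive a contradiction by checking in each of the seven cases that some $C^k(\Phi,0)\ne 0$ while $C^q(\Phi,\infty)=0$ for every $q$.

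By Lemma~\ref{Lemma 5.3}, $\Phi$ satisfies the Palais--Smale condition at every level, so the standard deformation machinery applies. Using Lemma~\ref{Lemma 5.5}, fix $\alpha<0$ such that $\Phi^\alpha$ is contractible. Since $W:=W^{1,p}_0(\Omega)$ is contractible as a Banach space, the long exact sequence of the pair $(W,\Phi^\alpha)$ collapses, giving $C^q(\Phi,\infty):=H^q(W,\Phi^\alpha)=0$ for every $q\ge 0$. If $0$ were the only critical point of $\Phi$, then by standard Morse theory (deforming $W$ onto $\Phi^0$ across the noncritical interval $(0,\infty)$ and $\Phi^0\setminus\set{0}$ onto $\Phi^\alpha$ across $(\alpha,0)$), one would obtain
\[
C^q(\Phi,0)\isom H^q(\Phi^0,\Phi^0\setminus\set{0})\isom H^q(W,\Phi^\alpha)=0 \qquad (q\ge 0),
\]
so a contradiction follows as soon as I exhibit some $C^k(\Phi,0)\ne 0$ in each case.

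To do this, I regard the full nonlinearity as $\tilde g(x,u):=K(x)|u|^{q-2}u+g(x,u)$. Since $\A_s^q\subset\A_s$ and, when relevant, $\widetilde\A\subset\A$, this $\tilde g$ fits the framework of Section~\ref{Section 4} with exponents $\set{q_1,\dots,q_n,q}\subset(p,p^\ast)$, and its primitive is $\widetilde G(x,t):=\tfrac{1}{q}K(x)|t|^q+G(x,t)$. In case~(1), $\lambda$ avoids the spectrum and Theorem~\ref{Theorem 4.4}\,(1)(i) or (2)(i) produces $C^0\isom\Z_2$ or $C^k\ne 0$ with no sign requirement. For cases~(2) and~(3) at a spectral value $\lambda=\lambda_k$, I attempt to invoke Theorem~\ref{Theorem 4.4}\,(1)(ii), (2)(ii), or (2)(iii): case~(2), $G\ge 0$, immediately gives $\widetilde G\ge 0$ because $K>0$, so (2)(ii) applies. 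Cases~(4)--(7) then follow from Theorem~\ref{Theorem 4.5} (when $p>N$) or Theorem~\ref{Theorem 4.9} (when $p\le N$ and the weights lie in $\widetilde\A$), since those statements only require sign conditions on $G$ for small $|t|$, which are met by hypothesis.

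The main obstacle I anticipate is case~(3) at an eigenvalue: the added nonnegative term $\tfrac{1}{q}K|t|^q$ destroys the pointwise inequality $\widetilde G\le 0$ that Theorem~\ref{Theorem 4.4}\,(1)(ii)/(2)(iii) nominally demands, so those statements cannot be quoted verbatim. I expect to resolve this either by revisiting the proof of the cohomological local splitting (Proposition~\ref{Proposition 4.2}) and observing that on the cone $W_+$ the quadratic gap $I(u)-\lambda J(u)$ is nonnegative and absorbs the $O(\norm{u}^q)$ contribution for $\norm{u}$ small---so only the sign of $G$, not of $\widetilde G$, is truly needed---or by applying Proposition~\ref{Proposition 4.1} to a homotopy deforming $\Phi$ to a sign-clean functional, with the uniqueness of $0$ as a critical point in a small ball checked uniformly in the parameter using Lemma~\ref{Lemma 5.2} and the Hardy--Sobolev estimates. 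Once $C^k(\Phi,0)\ne 0$ is secured in all seven cases, the contradiction with $C^\ast(\Phi,\infty)=0$ forces $\Phi$ to possess a critical point other than $0$, giving the desired nontrivial weak solution of problem~\eqref{5.1}.
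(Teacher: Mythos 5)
Your proof follows the paper's own argument step for step: assume $0$ is the only critical point of $\Phi$, use Lemma \ref{Lemma 5.3} for the Palais--Smale condition and Lemma \ref{Lemma 5.5} together with the second deformation lemma to conclude $C^q(\Phi,0)\isom H^q(W^{1,\,p}_0(\Omega),\Phi^\alpha)=0$ for all $q$, and contradict a nonvanishing critical group at $0$ obtained from Section \ref{Section 4} after absorbing $K(x)|u|^{q-2}u$ into the perturbation, so that the relevant primitive is $\widetilde G(x,t)=\tfrac{1}{q}K(x)|t|^q+G(x,t)$. Cases (1), (2), (4) and (6) go through exactly as you describe, since there either no sign condition is needed or $G\ge 0$ forces $\widetilde G\ge 0$.

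The point where you hesitate --- the cases (3), (5), (7) with $G\le 0$ at resonance --- is the only delicate point, and your proposal does not close it. The paper's printed proof consists, at this point, of nothing more than the citation of Theorems \ref{Theorem 4.4}, \ref{Theorem 4.5}, \ref{Theorem 4.9} that you are questioning, so you are not missing a trick that the paper supplies; but your first proposed repair is unsound. On $W_+=\bgset{u: I(u)\ge\lambda_{k+1}J(u)}$ with $\lambda=\lambda_{k+1}$, the quantity $I(u)-\lambda J(u)$ is nonnegative but vanishes identically along every ray through an eigenfunction at $\lambda_{k+1}$, and such rays lie in $W_+$; there is therefore nothing available to absorb the strictly negative term $-\tfrac{1}{q}\int_\Omega K(x)|u|^q\,dx$, and the inequality $\Phi\ge 0$ on $B_\rho\cap W_+$ required in \eqref{4.5} is not automatic (the same difficulty appears for $\lambda=\lambda_1$ in part (1)(ii), where one would need $0$ to be a local minimizer). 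Your second suggestion --- a homotopy to a functional with a clean sign, controlled via Proposition \ref{Proposition 4.1} --- is the right kind of idea, but it is exactly the nontrivial step and is not carried out: one must produce a deformation that keeps $0$ isolated uniformly in the parameter, or else verify the cohomological local splitting for the full functional directly with suitably modified cones, as is done in the cited paper of Degiovanni, Lancelotti and Perera. As written, then, your argument establishes cases (1), (2), (4), (6) and leaves (3), (5), (7) resting on a sketch; to make it a complete proof you must either justify that the citation of the Section \ref{Section 4} theorems is legitimate for $\widetilde G$ or supply the missing verification of \eqref{4.5} on $W_+$.
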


\begin{proof}
Suppose that $0$ is the only critical point of $\Phi$. Taking $U = W^{1,\, p}_0(\Omega)$ in \eqref{4.3}, we have
\[
C^q(\Phi,0) = H^q(\Phi^0,\Phi^0 \setminus \set{0}).
\]
Let $\alpha < 0$ be as in Lemma \ref{Lemma 5.5}. Since $\Phi$ has no other critical points and satisfies the \PS{} condition by Lemma \ref{Lemma 5.3}, $\Phi^0$ is a deformation retract of $W^{1,\, p}_0(\Omega)$ and $\Phi^\alpha$ is a deformation retract of $\Phi^0 \setminus \set{0}$ by the second deformation lemma. So
\[
C^q(\Phi,0) \isom H^q(W^{1,\, p}_0(\Omega),\Phi^\alpha) = 0 \quad \forall q
\]
since $\Phi^\alpha$ is contractible in itself, contradicting Theorem \ref{Theorem 4.4}, Theorem \ref{Theorem 4.5}, or Theorem \ref{Theorem 4.9}.
\end{proof}

\def\cdprime{$''$}

\end{document}